\newcommand{\ccb}{\color{black}}
\newcommand{\ccn}{\color{black}}
\newcommand{\F}{\mathbb F}
\newcommand{\mQ}{\mathsf{Q}}
\newcommand{\ba}{\mathbf{a}}
\newcommand{\bb}{\mathbf{b}}
\newcommand{\bPhi}{\mathbf{\Phi}}
\newcommand{\bB}{\mathbf{B}}
\renewcommand{\sim}{\bowtie}
\definecolor{blue-violet}{rgb}{0.54, 0.17, 0.89}
\newcommand{\cli}{\color{black}}
\newcommand{\cl}{\color{black}}
\newcommand{\cn}{\color{black}}
\renewcommand{\bar}{\hat}
\theoremstyle{thmstyleone}%
\newtheorem{theorem}{Theorem}[section]
\theoremstyle{theorem}%
\newtheorem{remark}[theorem]{Remark}%
\newtheorem{lemma}[theorem]{Lemma}
\newtheorem{corollary}[theorem]{Corollary}
\newtheorem{prop}[theorem]{Proposition}
\newtheorem{defn}[theorem]{Definition}
\newtheorem{prob}{Main Problem}
\newcounter{sidenote}
\begin{document}
	
	\title[On the Codebook Design for NOMA Schemes from Bent Functions]{\bf On the Codebook Design for NOMA Schemes from Bent Functions
	}
	
	\author[1]{\fnm{Chunlei} \sur{Li} \href{https://orcid.org/0000-0002-3792-769X}{\includegraphics[scale=0.01]{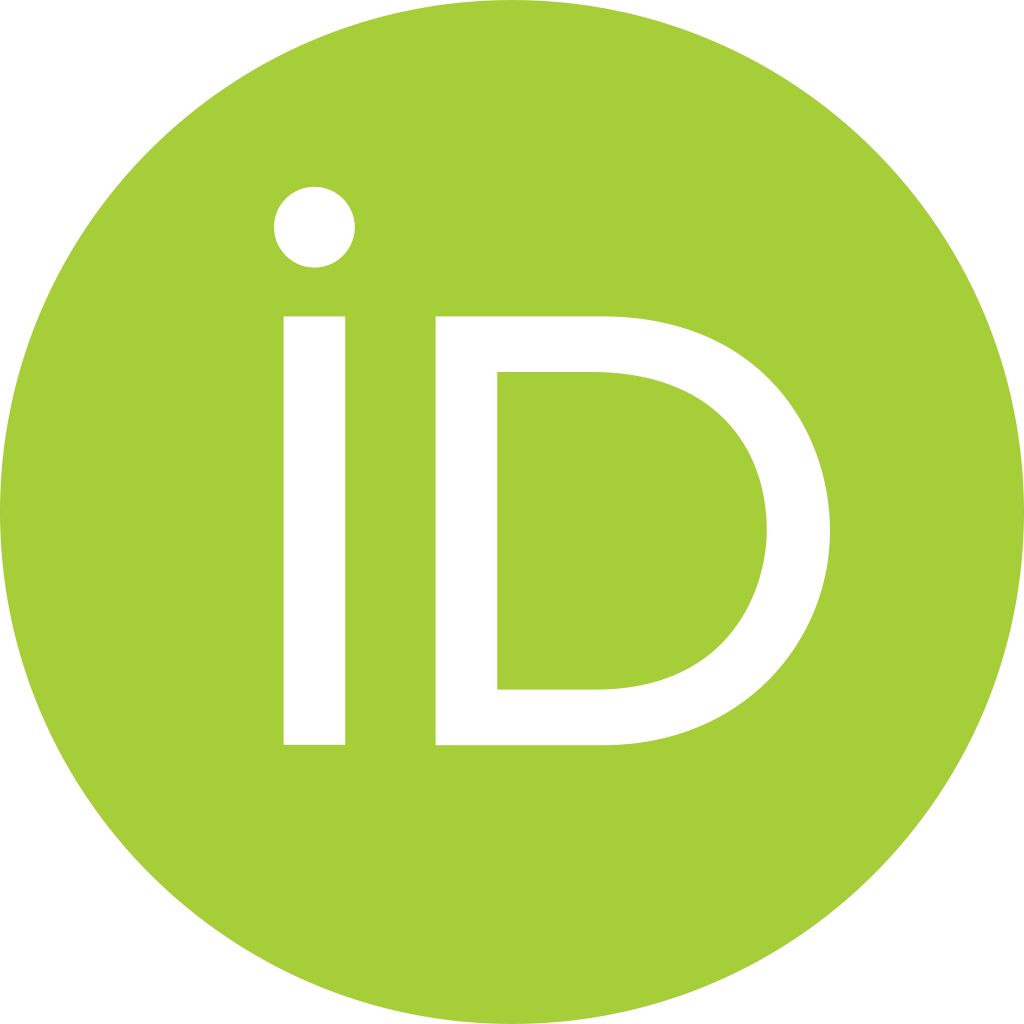}}
	} \email{\href{mailto:chunlei.li@uib.no}{chunlei.li@uib.no}}
	\author[2]{\fnm{Constanza} \sur{Riera} 
		\href{https://orcid.org/0000-0001-8829-0847}
		{\includegraphics[scale=0.01]{orcid_logo.png}}
	}
	\email{\href{mailto: csr@hvl.no}{csr@hvl.no}}
	
	\author[1]{\fnm{Palash} \sur{Sarkar} 
    }
	\email{\href{mailto: palash.sarkar@uib.no}{palash.sarkar@uib.no}}
	
	\author[3]{\fnm{Pantelimon} \sur{St\u anic\u a} 
		\href{https://orcid.org/0000-0002-8622-7120}
		{\includegraphics[scale=0.01]{orcid_logo.png}}
	} 
	\email{\href{mailto: pstanica@nps.edu}{pstanica@nps.edu}}

	%
	%
	%
	%
	\affil[1]{
		\orgdiv{Department of Informatics}, 
		\orgname{University of Bergen}, 
		\orgaddress{Bergen, \country{Norway}}}
	
	\affil[2]{
		\orgdiv{Department of Computer Science, Electrical Engineering
			and Mathematical Sciences},
		\orgname{Western Norway University of Applied Sciences}, 
		\orgaddress{Bergen, Norway}}
	
	\affil[3]{
		\orgdiv{Applied Mathematics Department}, 
		\orgname{Naval Postgraduate School},
		\orgaddress{Monterey, CA 93943, USA}}
	
	
	\abstract{ 
		Uplink grant-free non-orthogonal multiple access (NOMA) is a promising technology for massive connectivity with low latency and high energy efficiency. In code-domain NOMA schemes, \cl the requirements boil down to the design of codebooks that contain a large number of spreading sequences with low peak-to-average power ratio (PAPR) while maintaining low coherence. When employing binary Golay sequences with guaranteed low PAPR in the design, \cn the fundamental problem is to construct a large set of $n$-variable quadratic bent or near-bent functions in a particular form \cli such that \cn the difference of any two is bent for even $n$ or near-bent for odd $n$ to achieve optimally low coherence. \cn  
		In this work, we propose a theoretical construction of NOMA codebooks by applying a recursive approach to those particular quadratic bent functions in smaller dimensions. The proposed construction yields desired NOMA codebooks \cli that \cn contain $6\cdot N$ Golay sequences of length $N=2^{4m}$ for any positive integer $m$ and have the lowest possible coherence $1/\sqrt{N}$.
		
	}

	\keywords{NOMA, PAPR, coherence, bent and near-bent function, Walsh-Hadamard transform}
	
	
	
	\maketitle
	
	\section{Introduction}
    \label{sec:intro}
	
	The massive connectivity of wireless devices is a fundamental aspect of machine-type communications \cite{mtc2012}, which  
	must accommodate a large number of devices while ensuring minimal control overhead, low latency, and low power consumption for delay-sensitive and energy-efficient communication. Non-orthogonal multiple access (NOMA) \cite{noma5g2015,surveynoma18} has \cli emerged \cn as a solution for enabling massive device connectivity in 5G wireless systems. 
	By permitting multiple devices to share common resources without scheduling, uplink grant-free NOMA is a promising approach to \cli achieving \cn massive connectivity with low latency and high energy efficiency~\cite{yulnomacodebook}. 
	In grant-free code-domain NOMA, user-specific spreading sequences are assigned to all devices, allowing active devices to minimize signaling overhead. \cl In this NOMA scheme, the codebook in use should consist of a large number of spreading sequences with low peak-to-average power ratio (PAPR) to enable massive connectivity and energy efficiency and should have low coherence to minimize interference, which is critical for the system performance. \ccn
	
	\cl
	The design of large sets of spreading sequences with low correlation has been extensively researched in the literature \cite{cding2006,cding2007,luo2018}. \cn However, \cli constructing \cn large sequence sets that not only exhibit low correlation but also maintain low PAPR (see \cli Definition~\ref{def:papr}) and a small alphabet size is challenging. \cn
	\cl 
	Golay complementary sequences~\cite{golay1949} are a pair or a set of sequences that have zero aperiodic correlation sum \cli at all nonzero shifts. Binary Golay sequences have PAPR upper bounded by 2 \cn(see Subsection~\ref{subsec:CS}). The utility of Golay sequences in controlling the PAPR of orthogonal frequency division multiplexing (OFDM) schemes has been recognized since the 1950s. In 1999, Davis and Jedwab~\cite{Davis1999} established a fundamental connection between Golay sequences and Reed-Muller (RM) codes:
	all existing binary Golay sequences of length $2^n$ correspond to cosets of the first-order RM codes in the second-order binary RM codes, where the coset representatives are $n$-variable quadratic Boolean functions (the reader can find basics on these objects in Subsection~\ref{subsec:BF}) of algebraic normal form $Q_{\pi}(x)=\sum_{i=1}^{n-1} x_{\pi(i)} x_{\pi(i+1)}$, where $x=(x_1,\ldots,x_n) \in \F_2^n$ and $\pi$ is a permutation on the set $\{1,2,\ldots,n\}$. \cn
	Such binary sequences were later referred to as Golay-Davis-Jedwab (GDJ) sequences in the literature.
	This connection enabled Davis and Jedwab to construct a large codebook of Golay sequences to control power consumption in \cli OFDM \cn schemes~\cite{Davis1999}. 
	
	In code-domain NOMA, Yu \cite{yulnomacodebook}
	proposed \cli a codebook design for uplink grant-free NOMA schemes,
    where the codebook is a spreading matrix consisting of  
    GDJ sequences as its columns (see Definition~\ref{Def_Codebook_Design}).
	This ensures that each column in the spreading matrix has PAPR upper bounded by $2$.
	In addition, the spreading matrix \cn is desired to have low column-wise coherence (Definition~\ref{def_coherence}) for better performance in joint channel estimation and multiuser detection, \cli and to have a large number of columns to support massive connectivity.  
    These requirements pose a significant challenge in the codebook design.
    \cl
	In Yu's proposed construction~\cite{yulnomacodebook}, an optimally low coherence of the codebook can be derived by choosing a set of $L$ permutations $\{\pi_1,\ldots,\pi_L\}$ on $\{1,2,\ldots,n\}$ such that for any two distinct permutations $\pi_{l_1}, \pi_{l_2}$,
	the quadratic function $Q_{\pi_{l_1}}(x)+Q_{\pi_{l_2}}(x)$, $x\in\F_2^n$, is bent for even $n$ and near-bent for odd $n$ (see Definition~\ref{walsh_bent}). 
	For the simplicity of presentation, two permutations $\pi,~\rho$ on the set $\{1,2,\dots, n\}$ will be referred to as \textit{compatible} in this paper, denoted as $\pi \sim \rho$, if the corresponding quadratic function $Q_{\pi}(x)+Q_{\rho}(x)$ is bent (resp., near-bent) when $n$ is even (resp., odd). 
	Based on Yu's construction, from $L$ mutually compatible permutations, one can easily derive a NOMA codebook of $L\cdot 2^n$ spreading sequences with PAPR upper bounded by $2$ and optimally low coherence. \cli The parameter $L$ is termed the \textit{overloading factor} of the spreading matrix in~\cite{yulnomacodebook}. 
	The key task is then to construct a set of $L$ mutually compatible permutations of $\{1,2,\dots, n\}$  with $L$ as large as possible. \cl 
	Yu obtained compatible sets through a computer search. 
	However, this method quickly becomes infeasible when $n$ is larger than or equal to $9$. Tian, Liu and Li in~\cite{tina2022} proposed a graph-based approach to construct a compatible set of size $L=4$. Very recently,  in~\cite{vik2024}, the authors used the quadratic Gold functions to generate a compatible set of maximum size $\frac{p-1}{2}$, where $p$ is the minimum prime factor of $n+1$ for even $n$ (resp. $n$ for odd $n$). 
	This approach can yield a relatively large compatible set for carefully chosen $n$, especially when $n+1$ or $n$ is an odd prime. On the other hand, the set size drops dramatically when the minimum prime factor of $n+1$ for even $n$ (resp. $n$ for odd $n$) is small. 
	For example, for positive integers $n$ \cli such that
    $n$ modulo $6$ equals $2$ or $3$, the minimum factor of $n+1$ or $n$ is 3, and then $L = 1$; similarly, when $n$ modulo $10$ equals $4$ or $5$, the corresponding loading factor $L$ is only $2$. 
    \cl
    This motivates us to undertake an alternative approach to constructing codebooks for uplink grant-free NOMA schemes.

	In this paper, we propose a recursive construction of \cli a set of mutually compatible permutations of $\{1,2,\dots, n\}$, termed a \textit{compatible set}, for infinitely many $n$. \cli We denote by $S_n$ the set of all permutations of $\{1,2,\dots, n\}$, and by $IS_n$ the set of all permutations \cl
	that are compatible with the identity permutation $I_n$ of $\{1,2,\dots, n\}$.
	We start by investigating the conditions under which \cli a permutation in $IS_{n+m}$ can be derived from \cl 
    a permutation $\pi$ in $IS_n$ and a permutation $\rho$ in $IS_m$. This allows us to obtain a list of permutations in $IS_{n+4}$ from those in $IS_n$ and $IS_4$.
	Furthermore, we show that all compatible sets in dimension $4$ can be recursively extended to a \cli compatible \cl set of the same size in dimension \( 4m \) for any \( m \geq 2 \). 
	Consequently, this yields NOMA codebooks of $6\cdot 2^{4m}$ GDJ sequences with 
	optimally low coherence $2^{-2m}$, which complements the result in~\cite{vik2024}.
	\cn
	
	The remainder of this paper is organized as follows: Section~\ref{Sec:prelim} recalls
	basic and auxiliary results, and proposes the main research problems. Section~\ref{Sec:Construction} first gives comprehensive results for compatible permutations in dimension~$4$, then presents a recursive approach to constructing NOMA codebook for dimensions~$4m$. The work is concluded in Section~\ref{Sec:Con}.

	\section{Preliminaries}
	\label{Sec:prelim}
	Below we recall some basics of Boolean functions~\cite{CarletBook21}, spreading matrices in the context of NOMA schemes~\cite{yulnomacodebook}, and \cli present \cn the main research problem in the design of NOMA codebooks.
	\subsection{Boolean functions}
	\label{subsec:BF}
	Let $n$ be a positive integer. Denote by $\F_2^n$ the $n$-dimensional vector space over the finite field $\F_2$.
	There is a natural one-to-one correspondence $\varphi$ between the set $\{0, 1, \dots, 2^n-1\}$ and $\F_2^n$, namely, for an integer $j$ with $0\leq j<2^n$, one has $\varphi(j) = (j_1, \dots, j_n) \in \F_2^n$ with $j=j_1 + j_2\, 2 + \dots + j_n\, 2^{n-1}$. We shall identify an integer $j\in \{0, 1,\dots, 2^n-1\}$ as its corresponding vector $\varphi(j)$ and use them interchangeably when the context is clear. 
    \cli For $k=1,2,\dots, n$, we will use $e_k$ to denote the $k$-th row of the $n\times n$ identity matrix over $\F_2$. \cn
	
	An $n$-variable Boolean function is a function from $\F_2^n$ to $\F_2$. It can be represented by the \textit{truth-table} as $(f(0),f(1),\dots, f(2^n-1))$ or by the unique \textit{algebraic normal form} as 
	\[
	f(x) = \sum_{I \subseteq\{1, \ldots, n\}} a_I\left(\prod_{i \in I} x_i\right),
	\] 
	where the sum is taken modulo $2$ and $x=(x_1,x_2,\dots, x_n)\in \F_2^n$.
	The \textit{algebraic degree} of $f(x)$ is defined by $\deg(f) = \max \left\{ \lvert I\rvert\,:\, a_I \neq 0\right\}$, where $\vert I \vert$ denotes the size of $I$ (with the convention that the zero function has algebraic degree~0).
	An $n$-variable Boolean function is said to be \textit{linear} when it is of the form $L_c(x)=c_1x_1 + c_2x_2+\dots + c_nx_n$ for a vector $c=(c_1,c_2,\dots,c_n)\in \F_2^n$, and is said to be \textit{quadratic} when its algebraic degree is two.
	
	\begin{defn}
		\label{walsh_bent}
		The Walsh-Hadamard transform of an $n$-variable Boolean function $f(x)$ at a point $c \in \F_2^n$ is given by \ccb
		\[
		W_f(c) = \sum_{x\in \F_2^n}(-1)^{f(x) +  L_c(x) }.
		\]
		The set of Walsh-Hadamard coefficients $W_f(c)$ for all $c\in \F_2^n$ is called the \textit{Walsh-Hadamard spectrum} of $f(x)$. 
		The function $f(x)$ is said to be \textit{bent} if its Walsh-Hadamard spectrum is $\{\pm 2^{\frac{n}{2}}\}$ for even $n$, and said to be \textit{near-bent}  if its Walsh-Hadamard spectrum is $\{0, \pm 2^{\frac{n+1}{2}}\}$ for odd $n$. \ccn
	\end{defn} 
	   For a quadratic $n$-variable Boolean function $Q(x)$, its \textit{bilinear mapping} is given by $B(x,y)= Q(x+y)+Q(x)+Q(y)$. \cli 
    The kernel of the bilinear mapping of $Q(x)$ is defined by $V_Q = \{y\in \F_2^n\,:\, B(x,y) = 0 \text{ for any } x\in \F_2^n\}$, and the rank of $Q(x)$  is given by $r = n-\dim_{\F_2}(V_Q)$ ($\dim_{\F_2}(V_Q)$  denotes the dimension of the vector space $V_Q$ over $\F_2$). \cn  In addition, the bilinear mapping $B(x,y)$ of a quadratic function $Q(x)$ can be characterized by its corresponding symplectic matrix $\bB$, which is defined as an $n\times n$ binary matrix such that for $1\leq i, j\leq n$, the entry $\bB(i,j) = 1$ if and only if  
	$B(x,y)$ contains the term  $x_iy_j$, or equivalently, $x_ix_j$ occurs in the quadratic function $Q(x)$.
	Consequently, the rank of a quadratic function $Q(x)$ is identical to the rank of the corresponding symplectic matrix $\bB$~\cite{MacWilliams1983}. 
	It is a well-known fact~(see~\cite[p. 441]{MacWilliams1983} or \cite[Ch. 16]{SM16}) that the Walsh-Hadamard spectrum of $Q(x)$ depends upon  its rank only, precisely,  
	\begin{equation}\label{Eq_quad}
		W_Q(c) = \sum\limits_{x\in \F_2^n}(-1)^{Q(x) + L_c(x) } = 
		\begin{cases}
			\pm 2^{n-\frac{r}{2}}, &\text{ if $Q(x)=0\, \text{ for all }\, x\in V_Q$,} 
			\\
			0, & \text{otherwise}.
		\end{cases}
	\end{equation}
	
\cl 	For a quadratic bent function, we introduce a condition that will be used in subsequent discussions.
	
	\begin{defn}\label{Def_WHC}
		Let $n\geq 4$ be an even integer, and let $i,\,j$ be integers with $1\leq i<j \leq n$. A quadratic bent function $Q(x)$  on $\F_2^n$ 
		is said \cli to satisfy the \textit{Walsh-Hadamard condition (WHC)} \cl on $(i,\,j)$ if 
		\[
		\prod_{\alpha, \beta \in \F_2}W_{Q}(c+\alpha e_i+\beta e_j)  = 2^{2n}, \quad \cli \forall \,\, c\in \F_2^n,  \cl 
		\] \cli 
		{\color{cyan} where $e_i$ and $e_j$ denote the $i$-th row and $j$-th row, respectively, of the $n\times n$ identity matrix over $\F_2$.}

        \cn
	\end{defn}



	\subsection{NOMA codebooks from complementary sequences}
	\label{subsec:CS}
	
	\ccn
	
	    For a complex-valued sequence $\mathbf{a}=(a_0,\ldots,a_{N-1})$, its {\em aperiodic autocorrelation} $C_{\mathbf{a}}(\tau)$ at a shift $\tau$, where $\tau$ is an integer with $\vert\tau\vert<N$, is defined as follows: 
	\[C_{\mathbf{a}}(\tau)=
	\begin{cases}
	\sum_{i=0}^{N-\tau-1}a_i a^*_{i+\tau}, & 0\leq \tau<N,\\
	\sum_{i=0}^{N+\tau-1}a_{i-\tau} a^*_i,&-N<\tau<0,    
	\end{cases}
	\]where $a^*_i$ denotes the complex conjugate of $a_i.$
	A pair of sequences $\mathbf{a},\mathbf{b}$ of length $N$ is called a {\em Golay complementary pair}~\cite{golay1949} if they satisfy
	\begin{equation}\label{Eq_ComplPair}
		C_{\mathbf{a}}(\tau)+C_{\mathbf{b}}(\tau)=0
	\end{equation} \cli for any integer $\tau$ with $0<\vert \tau \vert <N$. \cn 
    \cn 
	Each sequence of such a complementary pair is called a
	{\em Golay complementary sequence}.
	\begin{defn}
		\label{def:papr}
		For a unimodular sequence $\mathbf{a}=(a_0,a_1,\dots, a_{N-1})$, 
        \cli where the modulus $\vert a_i \vert =1$, for $0\leq i<N$, \cn
		when it is transmitted through $N$ subcarriers, 
		the \textit{peak-to-average power ratio} (PAPR) of its OFDM signal
		is defined by 
		$$
		{\rm PAPR}(\mathbf{a}) = 
		\frac{\max_{t\in [0,1)} {\left\vert\sum_{i=0}^{N-1}a_ie^{2\pi\sqrt{-1}\,it}\right\vert}^2}{N}.
		$$  
	\end{defn}
	Note that $$\left\vert\sum_{i=0}^{N-1}a_ie^{2\pi\sqrt{-1}\,it}\right\vert^2 = 
	N + \sum_{0<\vert \tau \vert < N}C_{\mathbf{a}}(\tau) e^{2\pi \sqrt{-1} t\tau }.
	$$ 
	For a pair of Golay complementary pair $(\ba,\,\bb)$, due to their complementary property as in \eqref{Eq_ComplPair}, one obtains ${\rm PAPR}(\mathbf{a}) + {\rm PAPR}(\mathbf{b}) =  2$. 
    This indicates that each Golay complementary sequence has PAPR upper bounded by 2 since the value of PAPR is always non-negative. Owing to this attractive property, Golay complementary sequences have been widely used for PAPR control in multi-carrier OFDM schemes. In 1999 Davis and Jedwab \cite{Davis1999} established a fundamental relation between binary Golay complementary pairs of length $2^n$ and quadratic Boolean functions of particular forms as below. 
	
	\begin{lemma}[\textup{\cite[Th.\,3]{Davis1999}}] 
		\label{lem_GDJ}
		Let $\pi$  be  a permutation of $\{1,2, \dots, n\}$, $c=(c_1,c_2, \dots, c_n) \in \F_{2}^n$ and 
		$$f_{\pi}^c\left(x\right)=Q_{\pi}(x) + L_c(x) =\sum_{i=1}^{n-1} x_{\pi(i)} x_{\pi(i+1)}+ \sum_{i=1}^n c_i x_i.$$
		For $\epsilon, \epsilon^{\prime} \in \F_{2}$, define the functions
		\[
		\begin{split}
		& a\left(x\right)=f_{\pi}^c\left(x\right)+\epsilon, \\
		& b\left(x\right)=f_{\pi}^c\left(x\right)+x_{\pi(1)}+\epsilon^{\prime}.
		\end{split}
		\] \cl Let $\ba=(a_0,a_1,\dots, a_{2^n-1}), \bb=(b_0,b_1,\dots, b_{2^n-1})$ be two sequences associated with the functions $a(x)$ and $b(x)$ given by $a_j = (-1)^{a(j)}$ and $b_j=(-1)^{b(j)}$, respectively, for $0\leq j<2^n$. Then the sequences $\ba$ and $\bb$ form a binary Golay complementary pair of length $2^n$.  \cn
	\end{lemma}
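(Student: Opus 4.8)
The plan is to lift the one‑dimensional Golay condition to a \emph{multidimensional} one on $\F_2^n$, trading the integer shifts $\tau$ (which interact badly with binary carries) for vector shifts that respect the additive structure of $\F_2^n$; this makes the quadratic algebra transparent.

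First I would record the reduction. For a Boolean function $g$ on $\F_2^n$ let $\mathcal G$ denote the array $\mathcal G[x]=(-1)^{g(x)}$, $x\in\{0,1\}^n$, and set $C_{\mathcal G}(\delta)=\sum_x \mathcal G[x+\delta]\,\mathcal G[x]$, the sum over $x\in\{0,1\}^n$ with $x+\delta\in\{0,1\}^n$, for $\delta\in\mathbb{Z}^n$. Grouping the terms of $C_{\mathbf{a}}(\tau)=\sum_j a_{j+\tau}a_j$ by the value $\delta=\varphi(j+\tau)-\varphi(j)$, one checks that the $\delta$ that occur are exactly those in $\{-1,0,1\}^n$ with $\sum_k\delta_k 2^{k-1}=\tau$, and that for each such $\delta$ the map $j\mapsto\varphi(j)$ is a bijection onto the index set of $C_{\mathcal A}(\delta)$; hence
\[
C_{\mathbf{a}}(\tau)=\sum_{\delta\in\{-1,0,1\}^n,\ \sum_k\delta_k 2^{k-1}=\tau}C_{\mathcal A}(\delta),
\]
and likewise for $\mathbf{b},\mathcal B$. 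Since $\delta=0$ is the only element of $\{-1,0,1\}^n$ with $\sum_k\delta_k2^{k-1}=0$, all $\delta$ on the right are nonzero once $\tau\neq0$. It therefore suffices to show that $(\mathcal A,\mathcal B)$ is a multidimensional Golay array pair, i.e.\ $C_{\mathcal A}(\delta)+C_{\mathcal B}(\delta)=0$ for all $\delta\neq0$.

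Next I would evaluate $C_{\mathcal A}(\delta)$ using that $a=Q_\pi+L_c+\epsilon$ is quadratic. With $S=\{k:\delta_k\neq0\}$ and $s=\mathbf{1}_S\in\F_2^n$, the constraint $x,x+\delta\in\{0,1\}^n$ fixes $x$ on $S$ and leaves it free on $S^c$, and $x+\delta$ reduces modulo $2$ to $x\oplus s$; using $Q_\pi(x\oplus s)=Q_\pi(x)+Q_\pi(s)+B(x,s)$ for the bilinear form $B$ of $Q_\pi$ gives $a(x+\delta)+a(x)=B(x,s)+a(s)+\epsilon$. Hence $C_{\mathcal A}(\delta)=(-1)^{a(s)+\epsilon}\sum_x(-1)^{B(x,s)}$, and the character sum over the coset is $0$ when the linear functional $y\mapsto B(y,s)$ is not identically zero on $\F_2^{S^c}$, and $\pm 2^{|S^c|}$ otherwise. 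Since $a$ and $b$ have the same quadratic part $Q_\pi$, hence the same $B$, this alternative is the same for $\mathcal A$ and $\mathcal B$: in the vanishing case $C_{\mathcal A}(\delta)+C_{\mathcal B}(\delta)=0$ trivially, while in the non‑vanishing case $b=a+x_{\pi(1)}+\mathrm{const}$ makes the two signs opposite exactly when $s_{\pi(1)}=1$, i.e.\ when $\pi(1)\in S$.

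The only remaining point — and the one I expect to be the real content — is to rule out the bad configuration: $\delta\neq0$, the functional $y\mapsto B(y,\mathbf{1}_S)$ identically zero on $\F_2^{S^c}$, and $\pi(1)\notin S$ (which by the previous paragraph would give $C_{\mathcal A}(\delta)+C_{\mathcal B}(\delta)=\pm2^{|S^c|+1}\neq0$). I would translate this into graph language: the symplectic matrix of $Q_\pi$ is the adjacency matrix of the Hamiltonian path $P:\pi(1)-\pi(2)-\cdots-\pi(n)$, so $B(e_k,\mathbf{1}_S)$ is the parity of the number of $P$‑neighbours of $k$ that lie in $S$. The bad configuration then says $S\neq\emptyset$, every vertex outside $S$ has an even number of $S$‑neighbours, and the endpoint $\pi(1)$ lies outside $S$; a short induction along $P$ — $\pi(1)\notin S$ forces its only neighbour $\pi(2)\notin S$, which forces $\pi(3)\notin S$, and so on — yields $S=\emptyset$, a contradiction. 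Combining this with the preceding paragraph gives $C_{\mathcal A}(\delta)+C_{\mathcal B}(\delta)=0$ for every $\delta\neq0$, and the reduction completes the proof. Apart from this path‑combinatorial statement, everything is routine bookkeeping; the cases $n=1,2$ can be checked directly as a sanity test of the sign conventions.
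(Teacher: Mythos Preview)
The paper does not supply its own proof of this lemma; it is quoted verbatim as Theorem~3 of Davis and Jedwab and used as a black box throughout. So there is nothing in the paper to compare against.

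Your argument is correct. The reduction $C_{\mathbf a}(\tau)=\sum_\delta C_{\mathcal A}(\delta)$ over $\delta\in\{-1,0,1\}^n$ with $\sum_k\delta_k2^{k-1}=\tau$ is exactly the array/projection viewpoint (Fiedler--Jedwab--Parker): the bijection you describe is sound because $\varphi(j+\tau)-\varphi(j)$ always lands in $\{-1,0,1\}^n$ and recovers $\tau$ as its weighted sum, and conversely any admissible $(x,\delta)$ comes from $j=\sum x_k2^{k-1}$. After that, the computation $a(x\oplus s)+a(x)=B(x,s)+Q_\pi(s)+L_c(s)$ is straightforward for a quadratic $a$, and the identity $C_{\mathcal B}(\delta)=(-1)^{s_{\pi(1)}}C_{\mathcal A}(\delta)$ follows immediately from $b=a+x_{\pi(1)}+\text{const}$. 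The path induction ruling out the bad configuration is the genuine combinatorial content, and your argument---propagate $\pi(1)\notin S\Rightarrow\pi(2)\notin S\Rightarrow\cdots$ using that each $\pi(j)\notin S$ forces its two path-neighbours to have the same $S$-membership---is clean and correct.

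For comparison, the original Davis--Jedwab proof stays in one dimension: for each fixed $\tau\ne0$ it builds an explicit involution $j\mapsto j'$ on the index set (flipping the bit $x_{\pi(k)}$ where $k$ is chosen via the first position along the path where $\varphi(j)$ and $\varphi(j+\tau)$ agree) and shows the paired summands in $C_{\mathbf a}(\tau)+C_{\mathbf b}(\tau)$ cancel. The underlying path combinatorics is the same as yours; your array formulation simply strips away the carry bookkeeping first, which makes the role of the endpoint $\pi(1)$ and the bilinear form $B$ more transparent.
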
 


	\ccb 

	For compressed sensing-based joint channel estimation and multiuser detection in uplink grant-free NOMA, Yu \cite{yulnomacodebook} proposed a codebook in the form of a spreading matrix, \cli which should have low coherence to minimize interference and
    a large number of columns to accommodate sufficiently many user devices. \cn 
	We first recall the \textit{coherence} of a spreading matrix.
	\begin{defn}
		\label{def_coherence}
		Given an $N\times K$ matrix $\bPhi$ over the complex field $\mathbb{C}$, the \textit{coherence} of $\bPhi$ is given by 
		\[
		\mu(\bPhi) = \max_{1\leq k_1\neq k_2\leq K}\frac{\left\vert\langle \ba_{k_1}, \ba_{k_2}\rangle\right\vert}{\|\mathbf{a}_{k_1}\|_2\, \|\mathbf{a}_{k_2}\|_2},
		\] 
		where \cli $\ba _{k_1}, \ba_{k_2}$ are the $k_1$-th, $k_2$-th columns of $\bPhi$, respectively, and the notation \cn
		$\langle \ba, \bb\rangle =  \sum_{i=0}^{N-1}a_ib^*_i$ denotes the inner product between two sequences $\ba$ and $\bb$, \cli 
		and $\| \ba \|_2 = \sqrt{\langle \ba, \ba \rangle}$ is the $L^2$-norm of a sequence $\ba$. The ratio $K/N$ is called the overloading factor of the matrix $\bPhi$.
        \cn
	\end{defn}

	In \cite{yulnomacodebook}, Yu proposed a framework for designing the uplink grant-free NOMA scheme based on the complementary sequences in Lemma \ref{lem_GDJ}, referred to as the Golay-Davis-Jedwab (GDJ) sequences in the literature. 
	Here we recall the basics of the framework in \cite{yulnomacodebook}. 
	Arrange all the GDJ sequences associated with the quadratic functions $f_{\pi}^c(x)=Q_{\pi}(x)+L_c(x)$, where $c\in \F_2^n$, in a spreading matrix column by column, where the columns are indexed by $c$. Then we obtain a $2^n\times 2^n$ spreading matrix, \cli in which any two columns indexed by $c_1,c_2$ are orthogonal (since their inner product is equal to $\sum_{x\in \F_2^n}(-1)^{L_{c_1+c_2}(x)} $, which vanishes at any different indices $c_1, c_2\in \F_2^n$). \cn  Observe that for any sequence $\textbf{s}$ associated with the function
	$a(x)=f_{\pi}^c(x)+1$ or $b(x)=f_{\pi}^c(x)  + x_{\pi(1)} +  \epsilon'$ with $\epsilon' \in \F_2$ as in Lemma~\ref{lem_GDJ}, \cli the column $\textbf{s}'$ indexed by the vector $c$ or $c+e_{\pi(1)}$ (which differs from $c$ at the $\pi(1)$-th position) in the existing spreading matrix satisfies  $\vert \langle \mathbf{s}', \textbf{s}\rangle \vert = 2^n$, which is the highest coherence. This indicates that one cannot add 
    more GDJ sequences derived from the same permutation $\pi$ into the existing spreading matrix. 
	To further widen \cn the spreading matrix while maintaining low coherence, Yu~\cite{yulnomacodebook} adopted more permutations of $\{1,2,\dots, n\}$ and proposed the following spreading matrix. \ccn
	\begin{defn} 
		\label{Def_Codebook_Design}
		Consider $L$ distinct permutations $\pi_1, \ldots, \pi_L$ of $\{1, \ldots, n\}$. 
		Let $N=2^n$ and define an $N \times L N$ non-orthogonal spreading matrix as follows:
		\begin{equation}\label{Eq_SpreadingMatrix}
			\bPhi=\frac{1}{\sqrt{N}}\left[\bPhi_1, \ldots, \bPhi_L\right],
		\end{equation}
		where for each $1\leq \ell \leq L$,
		$$
		\bPhi_\ell=\left[\ba_{\pi_\ell}^{(0)}, \ba_{\pi_\ell}^{(1)}, \ldots, \ba_{\pi_\ell}^{(N-1)}\right]_{N\times N}
		$$ is a column-wise orthogonal matrix consisting of GDJ sequences  \cli $\ba_{\pi_\ell}^{(c)}$ associated with the $n$-variable quadratic functions $f_{\pi_{\ell}}^c(x)=Q_{\pi_\ell}(x) + L_c(x)$ as in Lemma~\textup{\ref{lem_GDJ}}, where $c$ runs through all vectors in $\F_2^n$. \cn
	\end{defn}
	
	The coherence of the above spreading matrix can be characterized in the following way~\cite[Th.\,1]{yulnomacodebook}.
	\begin{lemma}\label{lem_coherence}
		Let  $\bPhi$ be an $N\times LN$ spreading matrix defined as in \eqref{Eq_SpreadingMatrix}. \cli For 
        $\pi_{\ell_1},\pi_{\ell_1}\in\{\pi_1,\ldots,\pi_L\}$,
         let $\bB_{\ell_1,\ell_2}$ be an $n\times n$ binary symplectic matrix
         defined as follows:
		  for $1\leq i, j\leq n$, $\bB_{\ell_1,\ell_2}(i,j)=1$ if and only if  the 
		quadratic form $
        Q_{\pi_{\ell_1}}(x) + Q_{\pi_{\ell_2}}(x)$ \cl contains the term $x_ix_j$. Then the coherence of the spreading matrix $\bPhi$
		is given by
		\begin{equation}\label{Eq_coherence}
			\mu(\bPhi ) = \frac{1}{\sqrt{2^{r_{\min}}}}, \text{ where } r_{\min }=\min _{1 \leq \ell_1 \neq \ell_2 \leq L} \operatorname{rank}\left(\mathbf{B}_{\ell_1, \ell_2}\right).
		\end{equation}
	\end{lemma}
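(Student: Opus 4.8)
The plan is to compute $\mu(\bPhi)$ straight from Definition~\ref{def_coherence}, by turning every off-diagonal inner product of two columns into a Walsh--Hadamard coefficient of the pairwise sum $Q_{\pi_{\ell_1}}+Q_{\pi_{\ell_2}}$, and then reading the magnitude of that coefficient off the spectral formula~\eqref{Eq_quad}.

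First I would fix the normalization. By construction of $\bPhi_\ell$ in Definition~\ref{Def_Codebook_Design}, every column of $\bPhi$ equals $\frac{1}{\sqrt N}$ times a $\pm 1$-valued GDJ sequence of length $N=2^n$, so each column has $L^2$-norm $1$; hence the denominator in $\mu(\bPhi)$ is always $1$ and $\mu(\bPhi)=\max\bigl|\langle \ba,\bb\rangle\bigr|$ over all pairs of distinct columns $\ba,\bb$. Two columns lying in the same block $\bPhi_\ell$ are orthogonal by the column-wise orthogonality built into Definition~\ref{Def_Codebook_Design}, so only pairs of columns drawn from two distinct blocks $\bPhi_{\ell_1}$ and $\bPhi_{\ell_2}$ can contribute to the maximum, and any two such columns are automatically distinct.

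Next I would expand the cross-block inner product. For the columns $\frac{1}{\sqrt N}\ba_{\pi_{\ell_1}}^{(c_1)}$ and $\frac{1}{\sqrt N}\ba_{\pi_{\ell_2}}^{(c_2)}$, using $L_{c_1}(x)+L_{c_2}(x)=L_{c_1+c_2}(x)$ over $\F_2$ one gets
\begin{align*}
\Bigl\langle \tfrac{1}{\sqrt N}\ba_{\pi_{\ell_1}}^{(c_1)},\ \tfrac{1}{\sqrt N}\ba_{\pi_{\ell_2}}^{(c_2)}\Bigr\rangle
&= \frac{1}{N}\sum_{x\in\F_2^n}(-1)^{Q_{\pi_{\ell_1}}(x)+Q_{\pi_{\ell_2}}(x)+L_{c_1+c_2}(x)} \\
&= \frac{1}{N}\,W_{Q_{\pi_{\ell_1}}+Q_{\pi_{\ell_2}}}(c_1+c_2).
\end{align*}
Since $c_1$ and $c_2$ range independently over $\F_2^n$ for a fixed pair $\ell_1\neq\ell_2$, the difference $c_1+c_2$ ranges over all of $\F_2^n$; therefore the largest absolute inner product between a column of $\bPhi_{\ell_1}$ and a column of $\bPhi_{\ell_2}$ equals $\frac{1}{N}\max_{d\in\F_2^n}\bigl|W_{Q_{\pi_{\ell_1}}+Q_{\pi_{\ell_2}}}(d)\bigr|$.

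Finally I would bring in~\eqref{Eq_quad}. The form $Q_{\pi_{\ell_1}}+Q_{\pi_{\ell_2}}$ has degree at most two and its symplectic matrix is exactly $\bB_{\ell_1,\ell_2}$, so it has rank $r_{\ell_1,\ell_2}:=\operatorname{rank}(\bB_{\ell_1,\ell_2})$, and by~\eqref{Eq_quad} the maximum modulus of its Walsh--Hadamard spectrum is $2^{n-r_{\ell_1,\ell_2}/2}$. Substituting, the cross-block maximum becomes $\frac{1}{2^n}\,2^{n-r_{\ell_1,\ell_2}/2}=\frac{1}{\sqrt{2^{r_{\ell_1,\ell_2}}}}$, and taking the maximum over all pairs $\ell_1\neq\ell_2$ --- equivalently, taking $r_{\min}=\min_{\ell_1\neq\ell_2}r_{\ell_1,\ell_2}$ --- gives $\mu(\bPhi)=\frac{1}{\sqrt{2^{r_{\min}}}}$, as claimed. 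I expect the one point needing a little care to be the \emph{attainment} of the extreme Walsh value $2^{n-r_{\ell_1,\ell_2}/2}$: formula~\eqref{Eq_quad} already says the spectrum is two-valued, but one must also know that the large value genuinely occurs. This follows either from the description in~\eqref{Eq_quad} of the coset of $c$'s on which the spectrum is nonzero, or, more cheaply, from Parseval's identity $\sum_{d\in\F_2^n}W_Q(d)^2=2^{2n}$ together with the two-valued shape. Everything else is routine bookkeeping, and the computation is uniform even in the degenerate case $r_{\ell_1,\ell_2}=0$, where the two blocks coincide and the coherence contribution is $1$.
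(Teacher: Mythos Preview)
Your argument is correct and is exactly the route the paper has in mind: the lemma is quoted from~\cite[Th.\,1]{yulnomacodebook} without a self-contained proof, but the discussion immediately following it (the paragraph around~\eqref{Eq_Phi_Walsh}) sketches precisely your computation---cross-block inner products are Walsh--Hadamard coefficients of $Q_{\pi_{\ell_1}}+Q_{\pi_{\ell_2}}$, whose spectrum is $\{0,\pm 2^{n-r/2}\}$ by~\eqref{Eq_quad}, giving $W(\bPhi)=2^n\mu(\bPhi)$. Your handling of attainment via Parseval and of the degenerate rank-$0$ case is a nice addition that the paper leaves implicit.
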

	To have a low coherence of the above spreading matrix, it is desirable to keep $r_{\min}$ as large as possible. Since each matrix $\bB_{\ell_1,\ell_2}$ is a skew-symmetric matrix over $\F_2$, \cli which has the maximum rank $n$ for even $n$ (resp., $n-1$ for odd $n$), \cn 
	the coherence $\mu(\bPhi )$ has the following lower bounds: 
	\begin{equation}
		\label{Eq_coh_lowerbounds}
		\mu(\bPhi )  \geq \begin{cases}
			\frac{1}{\sqrt{2^{n}}} \text{ for even } n, \\
			\frac{1}{\sqrt{2^{n-1}}} \text{ for odd } n.
		\end{cases}
	\end{equation}
	From Definition \ref{Def_Codebook_Design}, 
    the coherence of the spreading matrix $\bPhi$ in \eqref{Eq_SpreadingMatrix} can be characterized by the Walsh-Hadamard transforms of the quadratic Boolean functions $
    Q_{\pi_{\ell_1}}(x) + Q_{\pi_{\ell_2}}(x)$.  
	In the language of the Walsh-Hadamard transform, for $1\leq \ell_1 \neq \ell_2 \leq L$, the quadratic function $
    Q_{\pi_{\ell_1}}(x) + Q_{\pi_{\ell_2}}(x)$ in Lemma \ref{lem_coherence} with rank $r$ has Walsh-Hadamard spectrum $\{0, \pm 2^{n-\frac{r}{2}}\}$. 
	Define \begin{equation}
		\label{Eq_Phi_Walsh}
		W(\bPhi) = \max_{1\leq \ell_1\neq \ell_2 \leq L} \max_{c\in \F_2^n} \mid W_{Q_{\pi_{\ell_1}}+Q_{\pi_{\ell_2}}}(c)\mid.
	\end{equation} 
	Then we have 
	$$
	W(\bPhi) = \frac{2^n}{\sqrt{2^{r_{\min}}}},
	{\text{ where }}r_{\min }=\min _{1 \leq \ell_1 \neq \ell_2 \leq L} \operatorname{rank}\left(Q_{\pi_{\ell_1}}(x) + Q_{\pi_{\ell_2}}(x)\right),
	$$ which yields the relation
	\(
	\ccb W(\bPhi) = 2^n \mu(\bPhi). \ccn
	\)
	\ccb Equivalently, the equality in \eqref{Eq_coh_lowerbounds} is achieved when the quadratic function $Q_{\pi_{\ell_1}}+Q_{\pi_{\ell_2}}(x)$, for any $1\leq \ell_1\neq \ell_2 \leq L$, is a bent function for even $n$, and a near-bent function for odd $n$.

	\subsection{The main research problem}

    \cli

    Let $\mathbf{\Psi}$ be an $N\times K$ complex-valued matrix where each column has $L^2$ norm $\sqrt{N}$. According to the well-known Welch bound \cite{Welch1974}, the coherence of $\mathbf{\Psi}$ satisfies
        \[
         \mu(\mathbf{\Psi}) \geq \sqrt{\frac{K-N}{(K-1)N}}.
        \]
        Matrices that achieve the equality in the Welch bound  have various applications in communications. However, it is rather challenging to construct them for $K\geq N+2$ (see~\cite{cding2007,Sustik2007} and references therein).
        In particular, when $N=2^n$ and the entries of $\mathbf{\Psi}$ take values $\pm 1$, each column of $\mathbf{\Psi}$ can be associated with a certain $n$-variable Boolean function. In this case, the coherence of $\mathbf{\Psi}$ can be characterized by the Walsh-Hadamard transform at the zero point for $n$-variable Boolean functions. As a result, when $K>N$,  the coherence of $\mathbf{\Psi}$ satisfies $\mu(\mathbf{\Psi})\geq \frac{1}{\sqrt{N}}$, 
        since for any nonlinear $ n$-variable Boolean function, 
        the maximum magnitude of its Walsh-Hadamard coefficients is at least $2^{\frac{n}{2}}$~\cite{CarletBook21}. 
        Furthermore, when columns of $\mathbf{\Psi}$ are associated with quadratic Boolean functions, the coherence of $\mathbf{\Psi}$ for odd $n$ satisfies $\mu(\mathbf{\Psi}) \geq \sqrt{\frac{2}{{N}}}$ as in \eqref{Eq_coh_lowerbounds}. 
        We see that the two lower bounds on the coherence of matrices $\mathbf{\Psi}$ that are generated from $n$-variable Boolean functions are close to the Welch bound $\sqrt{\frac{K-N}{(K-1)N}}$ for generic complex-valued matrices in the case of $K=LN$.

        In the application for uplink grant-free NOMA,  the design of the spreading matrix $\bPhi$ in Definition~\ref{Def_Codebook_Design} has several advantages: each column of $\bPhi$ as a spreading sequence has low PAPR upper bounded by $2$, and the coherence of $\bPhi$ can take a low value (close to the Welch bound) when the symplectic matrix $\bB_{\ell_1,\ell_2}$ for any $1\leq \ell_1 \neq \ell_2 \leq L$ has a
	largest possible rank, namely, $n$ for even $n$ or $n-1$ for odd $n$.
       In addition, the spreading matrix $\bPhi$ in \eqref{Eq_SpreadingMatrix} is desired to have an overloading factor $L=K/N$ as large as possible to accommodate sufficiently many devices. \cn 
        Interestingly, this problem was also raised earlier by Paterson \cite{Paterson2002} in the context of algebraic coding theory, where he considered GDJ sequences for OFDM codes and gave a relatively trivial upper bound on $L$ for even $n$.
	More specifically, for any quadratic function $Q_{\pi}(x)=\sum_{i=1}^{n-1} x_{\pi(i)} x_{\pi(i+1)}$, its corresponding symplectic matrix $\mathbf{B}$ with $\mathbf{B}(i,j)=1$ if and only if $x_ix_j$ occurs in $Q_{\pi}(x)$ has the following property: its first row has a zero as its first entry and Hamming weight either 1 or 2. This implies that there are at most $(n-1) + {n-1\choose 2} = {n\choose 2}$ different choices of the first row of $\mathbf{B}$. Since the sum of two symplectic matrices is required to have rank $n$ for even $n$, one can have at most ${n \choose 2}$ symplectic matrices with different first rows. Consequently, there are at most ${n \choose 2}$ permutations on $\{1,2,\dots, n\}$ that are mutually compatible. \cli This gives the following upper bound on the overloading factor of the spreading matrix $\bPhi$ in Definition \ref{Def_Codebook_Design} that has the lowest coherence $2^{-\frac{n}{2}}$:
    \begin{equation}\label{Eq_PatersonBound}
     L \leq n(n-1)/2.   
    \end{equation} \cn 
     This upper bound for $n=4$ can be easily confirmed to be tight. However, it appears far from being tight for large values $n\geq 6$ according to the experimental results as in~\cite[Table III]{vik2024}. 
	Here, natural, albeit challenging, problems arise.
	\begin{prob}
		\label{prob1}
		\cli	Let $N=2^n$ with $n\geq 6$ and $\bPhi$ be the $N\times LN$ spreading matrix given in Definition~\textup{\ref{Def_Codebook_Design}}.  
        Give a tighter upper bound on the overloading factor $L$ such that 
         the spreading matrix $\bPhi$ can maintain the lowest coherence 
		\[\mu(\bPhi ) = \begin{cases}
		\frac{1}{\sqrt{2^{n}}} \text{ for even } n, \\
		\frac{1}{\sqrt{2^{n-1}}} \text{ for odd } n.
		\end{cases}
		\]
        Furthermore, construct a spreading matrix $\bPhi$ with its overloading factor $L$ approaching the upper bound for infinitely many $n$. 
	\end{prob}

	\ccn
	
	\section{A Construction by Permutation Extension}
	\label{Sec:Construction}
	
	In this section, we will be concerned with theoretical constructions of a set of permutations of $\{1,2,\dots, n\}$ that
	satisfy the condition in our Main Problem~\ref{prob1}. 
	
	Denote by $I_n$ the identity permutation and by 
	$S_n$ the set of all permutations of $\{1, 2, \ldots, n\}$. 
	As is customary, we write a permutation as $\pi=[i_1,i_2,\ldots,i_n]$ 
	to mean that $\pi$ is defined by $\pi(1)=i_1,\pi(2)=i_2,\hdots, \pi(n)=i_n$. Given a permutation $\pi=[i_1,i_2,\ldots,i_n]$, we refer to the permutation $\widetilde{\pi}=[i_n,i_{n-1},\ldots,i_1]$ as its {\em reverse}.
	
	\smallskip
	
	We introduce the notion of \textit{compatible permutations} below.
	\begin{defn} 
		Two permutations $\pi_1, \,\pi_2$ in $S_n$ are said to be \textit{compatible}, denoted by $\pi_1 \sim \pi_2$,
		if the corresponding quadratic function $\cli Q_{\pi_1}(x)+Q_{\pi_2}(x) = \cn 
        \sum_{i=1}^{n-1}x_{\pi_1(i)}x_{\pi_1(i+1)}+\sum_{i=1}^{n-1}x_{\pi_2(i)}x_{\pi_2(i+1)}$ is bent for even $n$ and near-bent for odd $n$.  
		In addition, a subset $S\subset S_n$ is called a \textit{compatible set} if any two permutations in $S$ are compatible. 
	\end{defn}   
	It is desirable to have a compatible set with set size as large as possible. We first give some basic properties for compatible permutations.
	
	\begin{lemma}
		\label{lem_properties} 
		Let $\widetilde{\pi}$, $\pi^{-1}$ denote, respectively, the reverse and compositional inverse of a permutation $\pi$
		in $S_n$. Then for any two permutations $\pi, \, \sigma \in S_n$, we have:
		\begin{enumerate}[label=(\roman*)] 
			\ccb  \item[$(i)$] $\sigma\sim \pi$ if and only if \cl $\sigma\sim \widetilde{\pi};$\cn
			\item[$(ii)$]  $I_n \sim \pi$ if and only if   $I_n \sim \pi^{-1};$ 
			\cl		\item[$(iii)$]  $\pi \sim \sigma$  if and only if  $I_n \sim \sigma^{-1}\circ \pi$, where
			$\circ$ is the composition operator. \cn
		\end{enumerate}
	\end{lemma}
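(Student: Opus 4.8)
The plan is to reduce all three statements to a single observation: permuting the coordinates of an $n$-variable Boolean function is an invertible $\F_2$-linear change of variables, and such a change merely permutes the Walsh--Hadamard spectrum, hence preserves bentness and near-bentness simultaneously.

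First I would fix notation: for $\tau \in S_n$ let $T_\tau$ denote the operator on $n$-variable Boolean functions given by $(T_\tau f)(x) = f(x_{\tau(1)}, \dots, x_{\tau(n)})$, equivalently $T_\tau f = f \circ P_\tau$ where $P_\tau$ is the (invertible) permutation matrix with $(P_\tau x)_i = x_{\tau(i)}$. From the identity $W_{f \circ A}(c) = W_f\!\left((A^{-1})^{\mathsf T} c\right)$, valid for any invertible $A$, together with the fact that $(P_\tau^{-1})^{\mathsf T}$ is again a permutation matrix, one gets that $\{W_{T_\tau f}(c) : c \in \F_2^n\}$ and $\{W_f(c) : c \in \F_2^n\}$ coincide as multisets; in particular $T_\tau f$ is bent (resp.\ near-bent) exactly when $f$ is. A one-line computation from $Q_\pi(x) = \sum_{i=1}^{n-1} x_{\pi(i)} x_{\pi(i+1)}$ then shows $T_\tau Q_\pi = Q_{\tau \circ \pi}$, and $T_\tau$ is plainly $\F_2$-linear on Boolean functions. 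I would also record the elementary fact that reversing the list $[\pi(1),\dots,\pi(n)]$ leaves the unordered edge set $\{\{\pi(i),\pi(i{+}1)\}\}_{i=1}^{n-1}$ of the underlying path unchanged, and since $Q_\pi$ is the sum of $x_u x_v$ over those edges, $Q_{\widetilde{\pi}} = Q_\pi$ identically.

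With this in hand the three items drop out. Item $(i)$ is immediate, since $Q_{\widetilde{\pi}} = Q_\pi$ gives $Q_\sigma + Q_{\widetilde{\pi}} = Q_\sigma + Q_\pi$. For $(iii)$ I would apply $T_{\sigma^{-1}}$ to $Q_\pi + Q_\sigma$: by linearity and $T_\tau Q_\pi = Q_{\tau\circ\pi}$ this equals $Q_{\sigma^{-1}\circ\pi} + Q_{\sigma^{-1}\circ\sigma} = Q_{\sigma^{-1}\circ\pi} + Q_{I_n}$, and since $T_{\sigma^{-1}}$ preserves bentness (resp.\ near-bentness), $Q_\pi + Q_\sigma$ has the required spectrum precisely when $Q_{\sigma^{-1}\circ\pi} + Q_{I_n}$ does. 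Finally $(ii)$ is the special case of $(iii)$ obtained by taking the pair $(I_n,\pi)$ in place of $(\pi,\sigma)$, giving $I_n \sim \pi \iff I_n \sim \pi^{-1}\circ I_n = \pi^{-1}$; alternatively one can apply $T_{\pi^{-1}}$ directly to $Q_\pi + Q_{I_n}$ and obtain $Q_{I_n} + Q_{\pi^{-1}}$, which also serves as a check on the composition bookkeeping.

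I do not expect a genuine obstacle here. The two points that need attention are: (a) that a quadratic \emph{near}-bent function remains near-bent --- not merely of the same rank --- under the change of variables, which is why I would phrase the invariance through the full Walsh spectrum via $W_{f\circ A}(c) = W_f((A^{-1})^{\mathsf T}c)$ rather than through the rank formula \eqref{Eq_quad}; and (b) keeping the composition order straight so that it is $\sigma^{-1}\circ\pi$, and not its inverse, that appears in $(iii)$.
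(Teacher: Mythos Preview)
Your proof is correct and follows essentially the same approach as the paper: both arguments rest on the identity $Q_{\widetilde{\pi}} = Q_\pi$ for $(i)$ and on the coordinate-permutation substitution (your $T_\tau$, the paper's explicit $y_j = x_{\sigma^{-1}(j)}$) together with $T_\tau Q_\pi = Q_{\tau\circ\pi}$ and spectrum invariance for $(ii)$ and $(iii)$. Your packaging is slightly cleaner---deriving $(ii)$ as the specialisation $(\pi,\sigma)=(I_n,\pi)$ of $(iii)$ rather than proving it separately---but there is no substantive difference.
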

	\begin{proof}
		It is easy to see that $Q_\pi(x)=Q_{\widetilde{\pi}}(x)$, so \ccb $Q_{\sigma}(x)+Q_{\pi}(x)$  and $Q_{\sigma}(x)+Q_{\widetilde{\pi}}(x)$
		represent the same function, which implies claim~$(i)$\ccn. It can be observed that
		$Q_{I_n}(x_1,\ldots,x_n)+Q_{\pi}(x_1,\ldots,x_n)=Q_{\pi^{-1}}(x_{\pi(1)},\ldots,x_{\pi(n)})+Q_{I_n}(x_{\pi(1)},\ldots,x_{\pi(n)})$, which means that 
		$Q_{I_n}+Q_{\pi}$ and $Q_{I_n}+Q_{\pi^{-1}}$ have the same Walsh-Hadamard \ccb spectrum. \ccn 
		This implies claim~$(ii)$. Finally,
		with $(y_1,\dots, y_n) = \left(x_{\sigma^{-1}(1)}, \dots, x_{\sigma^{-1}(n)}\right)$,
		we have 
		\[
		\begin{split}
		Q_{I_n}(x_1,\ldots,x_n)+Q_{\sigma^{-1}\circ \pi}(x_1,\ldots,x_n)
		&= \sum_{i=1}^{n-1}(x_ix_{i+1} + x_{\sigma^{-1}\circ \pi(i)}x_{\sigma^{-1}\circ \pi(i+1)})
		\\&=\sum_{i=1}^{n-1}(y_{\sigma (i)}y_{\sigma(i+1)} + y_{\pi(i)}y_{\pi(i+1)})
		\\&=
		Q_{\sigma}(y_1,\dots, y_n) + Q_{\pi}(y_1,\dots, y_n),
		\end{split}
		\]
		which implies  claim~$(iii)$ when the Walsh-Hadamard transforms of the above quadratic functions are considered.
	\end{proof}
	
	\cl By Lemma~\ref{lem_properties} $(i)$, it is readily seen that the relations
	$\pi \sim \rho$, $\pi \sim \widetilde{\rho}$, $\widetilde{\pi}\sim \rho$
	and $\widetilde{\pi} \sim \widetilde{\rho}$ are equivalent.
	In addition, it follows from Lemma~\ref{lem_properties} $(ii)$ and $(iii)$ that $\pi \sim \sigma$  if and only if  $I_n \sim  \pi^{-1}\circ \sigma$ since  $\pi^{-1}\circ \sigma = (\sigma^{-1}\circ \pi)^{-1} $. \cn
	Without loss of generality, we can always assume that a compatible set $S \subset S_n$ contains the identity permutation $I_n$ (since one can obtain $I_n$ by applying the inverse of a permutation to all the others in a compatible set). From now on, we will focus only on compatible sets containing $I_n$. For ease of presentation, 
    \cli we denote by $IS_n$ the set of all permutations in $S_n$ that are compatible with $I_n$, i.e.,
	\[
	IS_n = \left\{\pi \in S_n \, \vert\, \pi \sim I_n\right\}.
	\]
    \cn 
	To gain some insight into compatible sets, we first take a look at 
	the case of $n=4$.
	
	\subsection{Compatible sets for $n=4$}
	\label{subsect_n4}
	
	When $n=4$, by an exhaustive search we obtain all the permutations that are compatible with the identity permutation $I_4$ as below:
	\[
	\begin{array}{rrrr}
	\rho_1=[3,2,4,1], &
	\rho_2=[2,4,1,3], &
	\rho_3= [ 3, 4, 1, 2 ], &
	\rho_4= [ 2, 4, 3, 1 ], \\
	\rho_5= [ 3, 1, 4, 2 ], &
	\rho_6=[ 1, 3, 4, 2 ],  &
	\rho_7=[ 4, 2, 1, 3 ], &
	\rho_8= [ 2, 1, 4, 3 ],\\
	\rho_9=[ 4, 1, 3, 2 ], &
	\rho_{10}=[ 2, 3, 1, 4 ], &
	\rho_{11}=[ 1, 4, 2, 3 ],&
	\rho_{12}=[ 3, 1, 2, 4 ].
	\end{array}
	\]
	Among these permutations, it is easily seen that 
	\begin{equation}
		\label{reverses}
		\rho_{11}=\widetilde{\rho_1}, \,\rho_5=\widetilde{\rho_2}, \,\rho_8=\widetilde{\rho_3},\, \rho_6=\widetilde{\rho_4}, \,\rho_{12}=\widetilde{\rho_7}, \rho_{10}=\widetilde{\rho_9}
	\end{equation} 
	and 
\begin{equation}
\label{inverses}
\begin{split}
\rho_5&=\rho_2^{-1},\,\rho_3^{-1}=\rho_3, \,\rho_8=\rho_8^{-1},\\
\rho_7&=\rho_1^{-1}=\rho_1^2,\, \rho_9=\rho_4^{-1}=\rho_4^2, 
	\rho_{11}=\rho_6^{-1}=\rho_6^{2},  \,\rho_{12}=\rho_{10}^{-1}=\rho_{10}^2.
		\end{split}
	\end{equation}
	By Lemma \ref{lem_properties} $(i)$-$(ii)$ and Equations \eqref{reverses}--\eqref{inverses},  the following \cli three chains \cn of permutations are compatible with $I_4$:
	\[
	\begin{array}{l}
	\rho_1 \xleftrightarrow[]{\text{rev}} \rho_{11} \xleftrightarrow[]{\text{inv}} \rho_{6}
	\xleftrightarrow[]{\text{rev}} \rho_4 \xleftrightarrow[]{\text{inv}} \rho_9 
	\xleftrightarrow[]{\text{rev}} \rho_{10} \xleftrightarrow[]{\text{inv}} \rho_{12} 
	\xleftrightarrow[]{\text{rev}} \rho_{7} \xleftrightarrow[]{\text{inv}} \rho_{1},
	\\
	\rho_2
	\xleftrightarrow[]{\text{rev}} \rho_{5} \xleftrightarrow[]{\text{inv}} \rho_{2}, \\
	\rho_3
	\xleftrightarrow[]{\text{rev}} \rho_{8} \xleftrightarrow[]{\text{inv}} \rho_{8} \xleftrightarrow[]{\text{rev}} \rho_3. 
	\end{array}
	\]
	By Lemma \ref{lem_properties} $(iii)$, \ccb $\pi\sim\pi^{-1}$ if and only if $I_n\sim\pi^2$, where $\pi^2=\pi\circ\pi$. Then, by Equation~\eqref{inverses}, $\rho_1\sim\rho_7, \rho_4\sim \rho_9, \rho_6\sim\rho_{11}, \rho_{10}\sim\rho_{12}$, creating four examples of compatible sets of size 3. \ccn   
	More work is needed to get a compatible set of larger size in which any two permutations are compatible.
	Starting from each of these pairs, one needs to 
	find a new permutation compatible with the existing ones from the remaining permutations, and repeat this process to obtain maximal compatible sets for $n=4$. 
	
	\cl 
	For instance, starting from $\rho_1, \rho_7$, one can first add $\rho_3$ 
	since the permutations $\rho_1^{-1}\circ\rho_3 = \rho_7\circ\rho_3 = \rho_6$ and $\rho_{7}^{-1}\circ\rho_3=\rho_1\circ\rho_3 = \rho_9$ are compatible with $I_4$. 
	This gives us a compatible set $\{I_4,  \rho_1, \rho_3, \rho_7\}$. 
	To add a new permutation, say $\rho$, we need to choose $\rho\in IS_4$ and check whether it is compatible with the existing permutations $\rho_1, \rho_3, \rho_7$.
	
	To facilitate the calculation, we provide Table~\ref{tab_invcomp} to look up  $\rho_i^{-1}\circ\rho_j$ for $1\leq i,j\leq 12$, where ``$0$'' indicates $\rho_i^{-1}\circ \rho_j = I_4$, ``$-$'' indicates $\rho_i^{-1}\circ \rho_j \not\in \{\rho_1, \rho_2, \dots, \rho_{12}\}$  (i.e., $\rho_i^{-1}\circ \rho_j$ is not compatible with $I_4$),  and $k$ in the entry of the $i$-th row and $j$-th column in Table~\ref{tab_invcomp}  indicates $\rho_i^{-1}\circ \rho_j = \rho_k$. 
	Since 
	$
	(\rho_i^{-1}\circ \rho_j)^{-1} = \rho_j^{-1} \circ \rho_i
	$, Table \ref{tab_invcomp} gives all information about the compatibility between permutations in $IS_4$.
	From Table \ref{tab_invcomp}, for any $\rho_i, \rho_j$, one can easily verify
	whether $\rho_i^{-1}\circ \rho_j$ belongs to $IS_4$. It is readily seen that the two permutations $\rho_2, \rho_5$ are not compatible with any permutation in $IS_4$.
	Furthermore, since $Q_{\pi}(x)=Q_{\widetilde{\pi}}(x)$, no permutation $\pi$ can be compatible with its reverse $\widetilde{\pi}$.  
	
	Continuing with the compatible set $\{I_4,  \rho_1, \rho_3, \rho_7\}$, 
	we see that $\rho_6$ is compatible with the existing permutations in this set since $\rho_6^{-1}\circ \rho_1 = \rho_4$, $\rho_6^{-1}\circ \rho_3 = \rho_{10}$ and $\rho_6^{-1}\circ\rho_7=\rho_3$,
	yielding a compatible set $\{I_4,  \rho_1, \rho_3, \rho_7, \rho_6\}$. For the remaining permutations $\rho_2, \rho_4, \rho_5, \rho_8,\rho_9, \rho_{10}, \rho_{11}, \rho_{12}$, 
	from Table \ref{tab_invcomp} we see that $\rho_2, \,\rho_5$ should be excluded instantly, and that, due to 
	mutual incompatibility, $\rho_1$ excludes $\rho_{11}$, $\rho_3$ excludes $\rho_8$, $\rho_6$ excludes $\rho_4$, and $\rho_{7}$ excludes $\rho_{12}$.
	Hence, we can further add $\rho_9$ or $\rho_{10}$ to the compatible set, leading to 
	$\{I_4,  \rho_1, \rho_3, \rho_7, \rho_6, \rho_9\}$ or
	$\{I_4,  \rho_1, \rho_3, \rho_7, \rho_6, \rho_{10}\}$ \cli (as the first two in Table \ref{tab_CS_4}). \cl  Since $\rho_9$ is incompatible with $\rho_{10}$, no more permutations can be added to these two compatible sets. As a matter of fact, according to the upper bound \cli \eqref{Eq_PatersonBound} by Paterson~\cite{Paterson2002}, \cl they are maximal compatible sets for $n=4$.
	
	As shown in Table \ref{tab_invcomp}, the two permutations $\rho_2, \rho_5$ are not compatible with any permutations in $IS_4$. For each permutation $\rho_i\sim I_4$, either $\rho_i$ or its reverse $\widetilde{\rho_i}$ can be included in a compatible set.
	Therefore, there are in total 32 compatible sets for $n=4$, which are listed in Table \ref{tab_CS_4}. They have the maximum size due to \cli the upper bound in \eqref{Eq_PatersonBound}. \cl Note that all these 
	maximal compatible sets give the same spreading matrix $\bPhi$ defined in \eqref{Eq_SpreadingMatrix} (under a permutation of $\bPhi_l$ in $\bPhi$) since for each permutation $\rho \in IS_4$ taken from a compatible set, 
	the spreading sequences $\ba_{\rho}^j$ and $\ba_{\widetilde{\rho}}^j$ are identical due to the fact that $Q_{\rho}(x)=Q_{\widetilde{\rho}}(x)$.
	\begin{table}[t!]
		\centering
		\setlength{\tabcolsep}{3pt} 
		\centering
		\begin{minipage}{0.8\textwidth}
			\centering
			\begin{tabular}{|c|cccccccccccc|}
				\hline
				& 1 & 2 & 3 & 4 & 5 & 6 & 7 & 8 & 9 & 10 & 11 & 12 \\
				\hline
				1 & 0 & - & 6 & 10 & - & 9 & 1 & 4 & 3 & 8 & - & 11 \\
				2 & - & 0 & - & - & - & - & - & - & - & - & - & - \\
				3 & 11 & - & 0 & 7 & - & 12 & 4 & - & 10 & 9 & 1 & 6 \\
				4 & 12 & - & 1 & 0 & - & - & 8 & 11 & 4 & 6 & 7 & 3 \\
				5 & - & - & - & - & 0 & - & - & - & - & - & - & - \\
				6 & 4 & - & 10 & - & - & 0 & 3 & 9 & 12 & 7 & 6 & 8 \\
				7 & 7 & - & 9 & 8 & - & 3 & 0 & 10 & 6 & 4 & 12 & - \\
				8 & 9 & - & - & 6 & - & 4 & 12 & 0 & 1 & 11 & 10 & 7 \\
				9 & 3 & - & 12 & 9 & - & 10 & 11 & 7 & 0 & - & 8 & 1 \\
				10 & 8 & - & 4 & 11 & - & 1 & 9 & 6 & - & 0 & 3 & 10 \\
				11 & - & - & 7 & 1 & - & 11 & 10 & 12 & 8 & 3 & 0 & 9 \\
				12 & 6 & - & 11 & 3 & - & 8 & - & 1 & 7 & 12 & 4 & 0 \\
				\hline
			\end{tabular}
		\end{minipage}
		\caption{Compositions $\rho_i^{-1}\circ \rho_j$ for $1\leq i,j\leq 12$}
		\label{tab_invcomp}
	\end{table}
	
	\begin{table}[t]

        \begin{tabular}{|l|l|l|}
\hline
\( \{I_4,\rho_1,\rho_3,\rho_6,\rho_7,\rho_9\} \) &
\( \{I_4,\rho_1,\rho_3,\rho_6,\rho_7,\rho_{10}\} \) &
\( \{I_4,\rho_4,\rho_7,\rho_8,\rho_{10},\rho_{11}\} \) \\
\hline
\( \{I_4,\rho_3,\rho_4,\rho_7,\rho_{10},\rho_{11}\} \) &
\( \{I_4,\rho_6,\rho_8,\rho_9,\rho_{11},\rho_{12}\} \) &
\( \{I_4,\rho_3,\rho_6,\rho_9,\rho_{11},\rho_{12}\} \) \\
\hline
\( \{I_4,\rho_1,\rho_6,\rho_8,\rho_{10},\rho_{12}\} \) &
\( \{I_4,\rho_1,\rho_3,\rho_6,\rho_{10},\rho_{12}\} \) &
\( \{I_4,\rho_1,\rho_6,\rho_7,\rho_8,\rho_{10}\} \) \\
\hline
\( \{I_4,\rho_3,\rho_4,\rho_{10},\rho_{11},\rho_{12}\} \) &
\( \{I_4,\rho_4,\rho_8,\rho_{10},\rho_{11},\rho_{12}\} \) &
\( \{I_4,\rho_1,\rho_3,\rho_4,\rho_{10},\rho_{12}\} \) \\
\hline
\( \{I_4,\rho_6,\rho_8,\rho_{10},\rho_{11},\rho_{12}\} \) &
\( \{I_4,\rho_6,\rho_7,\rho_8,\rho_9,\rho_{11}\} \) &
\( \{I_4,\rho_4,\rho_7,\rho_8,\rho_9,\rho_{11}\} \) \\
\hline
\( \{I_4,\rho_3,\rho_6,\rho_7,\rho_9,\rho_{11}\} \) &
\( \{I_4,\rho_1,\rho_4,\rho_8,\rho_9,\rho_{12}\} \) &
\( \{I_4,\rho_1,\rho_3,\rho_4,\rho_9,\rho_{12}\} \) \\
\hline
\( \{I_4,\rho_3,\rho_4,\rho_9,\rho_{11},\rho_{12}\} \) &
\( \{I_4,\rho_3,\rho_4,\rho_7,\rho_9,\rho_{11}\} \) &
\( \{I_4,\rho_3,\rho_6,\rho_{10},\rho_{11},\rho_{12}\} \) \\
\hline
\( \{I_4,\rho_4,\rho_8,\rho_9,\rho_{11},\rho_{12}\} \) &
\( \{I_4,\rho_1,\rho_4,\rho_7,\rho_8,\rho_9\} \) &
\( \{I_4,\rho_1,\rho_3,\rho_4,\rho_7,\rho_9\} \) \\
\hline
\( \{I_4,\rho_1,\rho_3,\rho_4,\rho_7,\rho_{10}\} \) &
\( \{I_4,\rho_6,\rho_7,\rho_8,\rho_{10},\rho_{11}\} \) &
\( \{I_4,\rho_3,\rho_6,\rho_7,\rho_{10},\rho_{11}\} \) \\
\hline
\( \{I_4,\rho_1,\rho_4,\rho_7,\rho_8,\rho_{10}\} \) &
\( \{I_4,\rho_1,\rho_4,\rho_8,\rho_{10},\rho_{12}\} \) &
\( \{I_4,\rho_1,\rho_6,\rho_7,\rho_8,\rho_9\} \) \\
\hline
\( \{I_4,\rho_1,\rho_3,\rho_6,\rho_9,\rho_{12}\} \) &
\( \{I_4,\rho_1,\rho_6,\rho_8,\rho_9,\rho_{12}\} \) & \\
\hline
\end{tabular}
		
		\caption{All the 32 maximal compatible sets for $n=4$.}\label{tab_CS_4}
		
	\end{table}
	\ccn
	
	\cli At the end of this section, we discuss the difficulty of the Main Problem~\ref{prob1}.
	\begin{remark}
    The Main Problem~\ref{prob1} can be reformulated as a clique problem.
		Define a graph $G_n$ such that a vertex in $G_n$ denotes a permutation in $S_n$ and an edge between two vertices in $G_n$ represents that the corresponding two permutations in $S_n$ are compatible. 
        Recall from Lemma~\textup{\ref{lem_properties}} $(iii)$ that $\pi\sim\sigma$ if and only if $I_n\sim(\pi^{-1}\circ\sigma)$.
        Hence the graph $G_n$ is an undirected regular graph with $n!$ vertices (where each vertex has the same number of neighbours).
        In practice, edges in $G_n$ can be created by first calculating $IS_n$ and then drawing an edge between two vertices $\pi$ and $\rho$ if $\pi^{-1}\circ\sigma\in IS_n$ (as done for $n=4$ in this section). In essence,
        the Main Problem~\textup{\ref{prob1}} asks to determine or approximate the size
        of the maximum clique in the regular graph $G_n$ and to find an $L$-clique in $G_n$ with $L$ as large as possible.
        
        Note that determining the size of the maximum clique in an arbitrarily given graph is NP-hard, and that 
        there does not even exist any polynomial-time $N^{1-\epsilon}$-approximation algorithm (which, upon the input of a graph $G$ of $N$ vertices, outputs a clique of size that is always at least the maximum clique size of $G$ divided by $N^{1-\epsilon}$) for any constant $\epsilon$ \cite{Haastad1999}. 
        Similarly for regular graphs, it was shown in \textup{\cite{BHK16}} that there exists no polynomial-time algorithm that approximates the maximum size of cliques in a regular graph within a factor of $N^{1/2-\epsilon}$ for any constant $\epsilon$.
        In the theory of computational complexity, existing results show that determining or even approximating the maximum clique size of a regular graph is NP-hard.
        Computationally, 
        the Bron-Kerbosch algorithm, which is one of the most efficient algorithms that lists all maximal cliques in a graph $G$ of $N$ vertices, has worst-case   running time $O(3^{N/3})$~\textup{\cite{Bron1973}}. 
        
        For the specific regular graph $G_n$ derived from $S_n$, with the rapid growth of $N=n!$ as $n$ increases, we believe that the Main Problem~\textup{\ref{prob1}} is computationally intractable for relatively small integers $n$ larger than $10$.
        For instance, for $n=7$, the regular graph $G$ has in total $5040$ vertices with degree $3857$ for each vertex. \cn
		The authors of \textup{\cite{liu2023}} included search results for $4\leq n\leq 9$ with the Bron-Kerbosch algorithm, where they provided only partial search results for $n=7,8,9$.
        
	\end{remark} 
	\cn

	\subsection{Extending compatible pairs from $S_{n}$ to $S_{n+m}$}
	\cl In this subsection we will consider extending a permutation $\pi \in IS_n$ with another permutation $\rho \in IS_m$ to obtain a permutation in $IS_{n+m}$. 
	This sets a good starting point for constructing compatible sets in $(n+m)$ variables.
	We will consider the following extension
	\begin{equation}
		\label{Eq_ext_r}
		\begin{split}
			\pi \rho^R &= [\pi(1),\ldots,\pi(n),\rho(1)+n,\ldots,\rho(m)+n].
		\end{split}
	\end{equation}
	For the above extension, it can be verified 
    that for $\pi, \sigma \in S_n$ and $\rho, \varrho\in S_m$, 
	\begin{equation}\label{Eq_Ext_Rev}
		(\pi \rho^R) \circ (\sigma \varrho^R)  = (\pi\circ \sigma)(\rho \circ \varrho)^R.
	\end{equation} 
    In particular, taking $\sigma = \pi^{-1}$ and $\varrho = \rho^{-1}$, we have
    $(\pi \rho^R) \circ (\pi^{-1}(\rho^{-1})^R) = I_{n+m}$, indicating 
	\begin{equation}\label{Eq_Ext_Inv_Rev}
		\begin{split}
			(\pi \rho^R)^{-1} 
			&= \pi^{-1}(\rho^{-1})^R.
		\end{split}
	\end{equation} 
	The following lemma discusses the compatibility between 
	two of such permutations in $IS_{n+m}$.
	\begin{lemma}\label{lem_mutual_compatible}
		Suppose two permutations $\pi_1\rho_1^R$ and $\pi_2\rho_2^R$ are compatible with $I_{n+m}$. 
		Then 
		$$\pi_1\rho_1^R \sim \pi_2\rho_2^R  \text{ if and only if } (\pi_1^{-1}\circ \pi_2)(\rho_1^{-1}\circ\rho_2)^R \sim I_{n+m}.$$
	\end{lemma}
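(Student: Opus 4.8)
The plan is to deduce the equivalence directly from Lemma~\ref{lem_properties}$(iii)$ using the two identities \eqref{Eq_Ext_Rev} and \eqref{Eq_Ext_Inv_Rev} already recorded for the reverse extension, so that no fresh Walsh--Hadamard computation is required. Recall from the remark following Lemma~\ref{lem_properties} that for permutations $\alpha,\beta \in S_{n+m}$ one has $\alpha \sim \beta$ if and only if $I_{n+m}\sim \alpha^{-1}\circ\beta$. First I would apply this with $\alpha = \pi_1\rho_1^R$ and $\beta = \pi_2\rho_2^R$ to rewrite the claim $\pi_1\rho_1^R\sim\pi_2\rho_2^R$ as the equivalent statement $I_{n+m}\sim(\pi_1\rho_1^R)^{-1}\circ(\pi_2\rho_2^R)$.

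Next I would simplify the composition on the right-hand side. By \eqref{Eq_Ext_Inv_Rev} we have $(\pi_1\rho_1^R)^{-1} = \pi_1^{-1}(\rho_1^{-1})^R$, and then the composition rule \eqref{Eq_Ext_Rev}, applied with $(\pi,\rho,\sigma,\varrho) = (\pi_1^{-1},\rho_1^{-1},\pi_2,\rho_2)$, gives
\[
(\pi_1\rho_1^R)^{-1}\circ(\pi_2\rho_2^R) = \bigl(\pi_1^{-1}(\rho_1^{-1})^R\bigr)\circ\bigl(\pi_2\rho_2^R\bigr) = (\pi_1^{-1}\circ\pi_2)(\rho_1^{-1}\circ\rho_2)^R.
\]
Substituting this into the previous reduction yields $\pi_1\rho_1^R\sim\pi_2\rho_2^R$ if and only if $I_{n+m}\sim(\pi_1^{-1}\circ\pi_2)(\rho_1^{-1}\circ\rho_2)^R$, which is precisely the assertion of the lemma.

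In this argument there is no real obstacle: the statement is a formal corollary of Lemma~\ref{lem_properties}$(iii)$ together with the compositional behaviour of the reverse extension. The hypothesis that $\pi_1\rho_1^R$ and $\pi_2\rho_2^R$ both lie in $IS_{n+m}$ is not strictly needed for the logical equivalence itself; I would keep it in the statement only because this is the situation of interest, namely adding a new permutation to a compatible set already containing $I_{n+m}$. The one routine verification I would still carry out is a check of \eqref{Eq_Ext_Rev} --- and hence of \eqref{Eq_Ext_Inv_Rev} --- by evaluating both sides on an index $i$ with $1\le i\le n$ and on an index $i$ with $n+1\le i\le n+m$ separately; both cases follow at once from the definition \eqref{Eq_ext_r}.
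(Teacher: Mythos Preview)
Your proof is correct and follows exactly the same route as the paper: the paper's proof simply says that the statement follows directly from Lemma~\ref{lem_properties}$(iii)$ together with \eqref{Eq_Ext_Rev} and \eqref{Eq_Ext_Inv_Rev}, which is precisely what you have spelled out. Your observation that the hypothesis $\pi_1\rho_1^R,\pi_2\rho_2^R\in IS_{n+m}$ is not actually used in the equivalence is also correct.
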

	\begin{proof}
		The statement follows directly from Lemma \ref{lem_properties} $(iii)$, \eqref{Eq_Ext_Rev} and~\eqref{Eq_Ext_Inv_Rev}.
	\end{proof}
	\cn
	In this paper we are mainly concerned with the case of even dimensions. \cli Note that there are only two permutations in $S_2$, namely $I_2$ and $\widetilde{I_2}$, and $\widetilde{I_2}$ is not compatible with $I_2$. 
    \cl
	For the above extension, then, the cases where $n=2$ or $m=2$ cannot lead to useful compatible permutations in $IS_{n+m}$. 
    We will discuss extensions for $n,m\geq 4$. 
	
	\cl Recall that in Definition \ref{Def_WHC} we introduced the Walsh-Hadamard condition (WHC) for quadratic bent functions.  Below we discuss when WHC can be satisfied, which will be frequently used.
	
	Let $n$ be a positive integer, and $i,j$ be two integers with $1\leq i <j \leq n$. \cli Let $u,\,v \in \F_2$ and define a flat  
	\(\Omega_{u,v} = \{(x_1,\dots,x_n)\in \F_2^n\,:\, x_i = u, x_j = v\}.\)
	It is clear that $\F_2^n$ can be partitioned as $\F_2^n = \Omega_{0,0}\sqcup \Omega_{0,1}\sqcup \Omega_{1,0}\sqcup \Omega_{1,1}$.  
	For a quadratic function $Q(x)$, we denote the $(n-2)$-variable Boolean function $Q\arrowvert_{x_i=u,x_j=v}(x)$
    (which is obtained by restricting 
	$Q(x)$ on the flat $\Omega_{u,v}$) for short as $\mQ\vert_{u,v}(\bar{x})$,
    where $\bar{x}\in \F_2^{n-2}$ denotes the vector obtained by removing $x_i, \,x_j$ from $x$ in $\F_2^n$.  \cl
	For instance, assuming $n=4$, $(i,j)=(1,3)$ and $Q(x) = x_1x_2 + x_1x_4 + x_2x_4 + x_3x_4$,
	we obtain $\mQ\vert_{0,0}(\bar{x}) = Q\vert_{x_1=0,x_3=0}(x) = x_2x_4$, and similarly  $\mQ\vert_{0,1}(\bar{x}) = x_2x_4 + x_4$, $\mQ\vert_{1,0}(\bar{x}) = x_2x_4 + x_2 + x_4$ and $\mQ\vert_{1,1}(\bar{x}) = x_2x_4 + x_2 $, where $\bar{x}=(x_2,x_4)$. Note that for $n=4$, the restriction functions for certain quadratic functions $Q(x)$ can have algebraic degree one; e.g., for
	$Q(x) = x_1x_2 + x_1x_3 + x_1x_4 + x_3x_4$ and $(i,j)=(1,3)$, all the restriction functions $\mQ\vert_{u,v}(\bar{x})$ have algebraic degree one.
	\cli For fixed integers $i,j$ with $1\leq i <j \leq n$,
    the functions $\mQ\vert_{0,0}(\bar{x}), \mQ\vert_{0,1}(\bar{x}), \mQ\vert_{1,0}(\bar{x}), \mQ\vert_{1,1}(\bar{x})$ are decompositions of $Q(x)$ as discussed in \cite[Sec.~V]{CCCF01}. \cl 
	
	For an even integer $n$, the following proposition discusses the relation 
    between the Walsh-Hadamard transforms of \cli a bent function \cl $Q(x)$ and its decompositions $\mQ\vert_{u,v}(\bar{x})$ for $u,v\in \F_2$. 
	
\begin{prop} 
    \label{Prop_WHC} \cli Let $n\geq 4$ be an even integer and $i,j$ be integers with $1\leq i <j \leq n$. \cl 
Assume $Q(x)$ is a quadratic bent function on $\F_2^n$, and $\mQ\vert_{u,v}(\bar{x}) = Q\vert_{x_i=u, x_j=v}(x)$, where  $u,v\in \F_2$,
are the decompositions of $Q(x)$. Then for any $c\in \F_2^n$, 
the Walsh-Hadamard transforms $W_{Q}(c)$ and $W_{\mQ\vert_{u,v}}(\bar{c})$, where $\bar{c}$ is obtained by removing $c_i,c_j$ from $c\in \F_2^n$, satisfy one of the following:
\begin{enumerate}[label=(\roman*)] 
\item three of $\vert W_{\mQ\vert_{u,v}}(\bar{c})\vert$ are zero and the remaining one equals $2^{\frac{n}{2}}$; this is equivalent to 
$\prod_{\alpha,\beta\in \F_2}W_{Q}(c+\alpha e_i + \beta e_j)=2^{2n}$, the WHC on $(i,j)$; or
\item \cli $\vert W_{\mQ\vert_{u,v}}(\bar{c})\vert=2^{\frac{n-2}{2}}$ for all $u,v\in \F_2$ \cl with three of them having the same sign and the remaining one having the opposite sign; this is equivalent to $\prod_{\alpha,\beta\in \F_2}W_{Q}(c+\alpha e_i + \beta e_j)=-2^{2n}$,
		\end{enumerate}    
        where $e_i, e_j$ are as given in Definition \ref{Def_WHC}.
	\end{prop}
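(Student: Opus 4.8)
The plan is to express the four Walsh--Hadamard coefficients $w_{\alpha\beta}:=W_Q(c+\alpha e_i+\beta e_j)$, $(\alpha,\beta)\in\F_2^2$, as the image of the four restriction coefficients $W_{\mQ\vert_{u,v}}(\bar c)$, $(u,v)\in\F_2^2$, under a size-$4$ Walsh--Hadamard transform, and then to run a short parity-and-Parseval argument on the resulting scalars.

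First I would record three identities. Splitting the defining sum of $W_Q$ according to the values $x_i=u$, $x_j=v$ yields the decomposition
\[
w_{\alpha\beta}\;=\;\sum_{u,v\in\F_2}(-1)^{(c_i+\alpha)u+(c_j+\beta)v}\,W_{\mQ\vert_{u,v}}(\bar c)\qquad\text{for all }\alpha,\beta\in\F_2,
\]
which holds for an arbitrary function $Q$ (so the fact that the $\mQ\vert_{u,v}$ may be affine is irrelevant). Inverting the $2\times2$ Hadamard transform gives $4\,W_{\mQ\vert_{u,v}}(\bar c)=\sum_{\alpha,\beta}(-1)^{(c_i+\alpha)u+(c_j+\beta)v}w_{\alpha\beta}$; the orthogonality of the Hadamard rows turns this into the Parseval-type identity $\sum_{u,v}W_{\mQ\vert_{u,v}}(\bar c)^2=\tfrac14\sum_{\alpha,\beta}w_{\alpha\beta}^2$; and specialising $(\alpha,\beta)=(c_i,c_j)$ in the decomposition gives $\sum_{u,v}W_{\mQ\vert_{u,v}}(\bar c)=W_Q(c^{(0)})$, where $c^{(0)}$ denotes $c$ with its $i$-th and $j$-th coordinates set to $0$.

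Next I would feed in bentness of $Q$: every $w_{\alpha\beta}=\pm2^{n/2}$, so the Parseval-type identity forces $\sum_{u,v}W_{\mQ\vert_{u,v}}(\bar c)^2=2^n$, while $W_Q(c^{(0)})=\pm2^{n/2}$ gives $\bigl|\sum_{u,v}W_{\mQ\vert_{u,v}}(\bar c)\bigr|=2^{n/2}$. Let $s$ be the number of $(\alpha,\beta)$ with $w_{\alpha\beta}=-2^{n/2}$. For each fixed $(u,v)$ the sign pattern $\bigl((-1)^{(c_i+\alpha)u+(c_j+\beta)v}\bigr)_{\alpha,\beta\in\F_2}$ is either constant or balanced (two $+1$'s and two $-1$'s), so the number $t$ of negative terms in the signed sum $4\,W_{\mQ\vert_{u,v}}(\bar c)=\sum_{\alpha,\beta}(-1)^{(c_i+\alpha)u+(c_j+\beta)v}w_{\alpha\beta}$ satisfies $t\equiv s\pmod2$; thus $4\,W_{\mQ\vert_{u,v}}(\bar c)=(4-2t)2^{n/2}$, whence $W_{\mQ\vert_{u,v}}(\bar c)\in\{0,\pm2^{n/2}\}$ for all $(u,v)$ when $s$ is even, and $W_{\mQ\vert_{u,v}}(\bar c)=\pm2^{(n-2)/2}$ for all $(u,v)$ when $s$ is odd.

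Finally I would dispose of the two cases. If $s$ is even, each $W_{\mQ\vert_{u,v}}(\bar c)^2$ is $0$ or $2^n$ and they sum to $2^n$, so exactly one equals $2^n$ and the other three vanish; this is alternative~$(i)$, and $\prod_{\alpha,\beta}w_{\alpha\beta}=(-1)^s2^{2n}=2^{2n}$ is the WHC on $(i,j)$. If $s$ is odd, write $W_{\mQ\vert_{u,v}}(\bar c)=\epsilon_{u,v}2^{(n-2)/2}$ with $\epsilon_{u,v}\in\{\pm1\}$; then $\bigl|\sum_{u,v}\epsilon_{u,v}\bigr|=2$, and a sum of four $\pm1$'s has absolute value $2$ precisely when three of the signs coincide and the fourth is opposite; this is alternative~$(ii)$, and $\prod_{\alpha,\beta}w_{\alpha\beta}=(-1)^s2^{2n}=-2^{2n}$. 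Since $s$ is even or odd and the two conclusions are mutually exclusive (one has vanishing coefficients, the other does not), both stated equivalences with $\prod_{\alpha,\beta}w_{\alpha\beta}=\pm2^{2n}$ follow at once. I do not anticipate a real obstacle; the only delicate points are the sign bookkeeping in the size-$4$ Hadamard transform and the parity observation that a balanced sign pattern alters the count of negative terms only by an even number — this is exactly what makes $|W_{\mQ\vert_{u,v}}(\bar c)|$ independent of $(u,v)$ in the odd case and thus drives the whole argument.
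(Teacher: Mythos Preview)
Your argument is correct and, in fact, cleaner and strictly more general than the paper's. The paper proceeds by exploiting that $Q$ is quadratic: since the four restrictions $\mQ\vert_{u,v}$ share the same quadratic part, they share a common rank $r$, so each $W_{\mQ\vert_{u,v}}(\bar c)\in\{0,\pm 2^{(n-2)-r/2}\}$; the paper then invokes \cite[Theorem~V.4,~Eq.~(33)]{CCCF01} to obtain the Parseval identity $\sum_{u,v}W_{\mQ\vert_{u,v}}(\bar c)^2=2^n$, which forces $r\in\{n-4,n-2\}$ and hence the two alternatives. Your route instead inverts the size-$4$ Hadamard relation and reads off both the Parseval identity and the dichotomy from the parity of the number $s$ of negative signs among the four values $w_{\alpha\beta}=\pm 2^{n/2}$; the key observation that the auxiliary sign pattern $(-1)^{(c_i+\alpha)u+(c_j+\beta)v}$ is constant or balanced (so flips an even number of signs) is what replaces the rank analysis entirely. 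What this buys you is that the argument uses only bentness of $Q$ --- nowhere do you need that $Q$ is quadratic or that the restrictions have degree at most two --- and it is self-contained, avoiding the external citation to \cite{CCCF01}. The paper's approach is more tied to the ambient quadratic setting of the article and perhaps reads more naturally there, but yours is the sharper proof of the proposition as stated.
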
	
	\begin{proof}
		According to the definition of $\mQ\vert_{u,v}(\bar{x})$, the 
		Walsh-Hadamard transform of $Q(x)$ at $c\in \F_2^n$ satisfies
		\begin{equation}\label{Eq_WH_Q}
		\begin{aligned}
		W_{Q}(c) & = \sum_{x\in \F_2^n}(-1)^{Q(x)+L_c(x)} 
		\\ &=
		\sum_{x_i,x_j\in \F_2}\sum_{\bar{x}\in \F_2^{n-2}}(-1)^{Q(x)+c_ix_i+c_jx_j+L_{\bar{c}}(\bar{x})} 
		\\ &=
		\sum_{u,v\in \F_2}\sum_{\bar{x}\in \F_2^{n-2}}(-1)^{\mQ\vert_{u,v}(\bar{x})+c_iu+c_jv+L_{\bar{c}}(\bar{x})} 
		\\ &= \sum_{u,v\in \F_2}(-1)^{c_iu+c_jv}W_{\mQ\vert_{u,v}}(\bar{c})
		\\&=W_{\mQ\vert_{0,0}}(\bar{c}) + (-1)^{c_i}W_{\mQ\vert_{1,0}}(\bar{c}) + (-1)^{c_j}W_{\mQ\vert_{0,1}}(\bar{c}) + (-1)^{c_i+c_j}W_{\mQ\vert_{1,1}}(\bar{c}) 
		\\&= \pm 2^{\frac{n}{2}}.
		\end{aligned}
		\end{equation}
		In a similar manner, 
		for $\alpha,\beta\in \F_2$, we have 
		\begin{equation*}
		\begin{aligned}
		W_{Q}(c+\alpha e_i + \beta e_j) & 
		=  \sum_{x\in \F_2^n}(-1)^{Q(x)+(c_i+\alpha )x_i+(c_j+\beta)x_j+L_{\bar{c}}(\bar{x})} 
		\\&
		= \sum_{u,v\in \F_2}(-1)^{(c_i+\alpha )u+(c_j+\beta)v}W_{\mQ\vert_{u,v}}(\bar{c}),
		\end{aligned}
		\end{equation*} which gives 
		\begin{equation}\label{Eq_WH_Q_uv}
		\left[\begin{array}{c}
		W_{Q}(c) \\
		W_{Q}(c+e_i) \\
		W_{Q}(c+e_j) \\
		W_{Q}(c+e_i+e_j) 
		\end{array}\right]		
		= 
		\cli\begin{bmatrix}
		1 & 1 & 1 & 1 \\
		1 & -1 & 1 & -1 \\
		1 & 1 & -1 & -1 \\
		1 & -1 & -1 & 1 \\
		\end{bmatrix}\cn
		\left[\begin{array}{r}
		W_{\mQ\vert_{0,0}}(\bar{c})  \\
		(-1)^{c_i}W_{\mQ\vert_{1,0}}(\bar{c}) \\
		(-1)^{c_j}W_{\mQ\vert_{0,1}}(\bar{c}) \\
		(-1)^{c_i+c_j}W_{\mQ\vert_{1,1}}(\bar{c})
		\end{array}	\right].
		\end{equation}			
		
		Note that when $n=4$, the quadratic function $\mQ\vert_{u,v}(\bar{x})$ for all 
		$u,v\in \F_2$ can have algebraic degree one or two, and when $n>4$, 
		the functions $\mQ\vert_{u,v}(\bar{x})$ for all 
		$u,v\in \F_2$ are quadratic. We start with the case where $n=4$ and 
		$\mQ\vert_{u,v}(\bar{x})$ has algebraic degree one for each $u,v\in \F_2$.
		
		When $n=4$ and $\mQ\vert_{u,v}(\bar{x})$ has algebraic degree one, it is well known that 
		the Walsh-Hadamard transforms $W_{\mQ\vert_{u,v}}(\bar{c})$ equals $2^{n-2}$ or $-2^{n-2}$ at one point $\bar{c}$ and 
		equals zero at all the remaining points $\bar{c} \in \F_2^2$. In addition, it follows from \eqref{Eq_WH_Q}
		that for any $c\in \F_2^n$,
		\begin{equation*}\label{Eq_Q-2}
		W_{\mQ\vert_{0,0}}(\bar{c}) + (-1)^{c_i}W_{\mQ\vert_{1,0}}(\bar{c}) + (-1)^{c_j}W_{\mQ\vert_{0,1}}(\bar{c}) + (-1)^{c_i+c_j}W_{\mQ\vert_{1,1}}(\bar{c}) = \pm 2^{\frac{n}{2}}.
		\end{equation*}
		For $n=4$, one has $n-2 = n/2 =2$ and four choices of $\bar{c}$ in $\F_2^{n-2}$.
		Thus, for each $\bar{c}\in \F_2^2$, 
		three of $W_{\mQ\vert_{0,0}}(\bar{c}), W_{\mQ\vert_{1,0}}(\bar{c}), W_{\mQ\vert_{0,1}}(\bar{c}), W_{\mQ\vert_{1,1}}(\bar{c})$
		are equal to zero and the remaining one is equal to $\pm 2^{\frac{n}{2}}$. 
        Without loss of generality, 
        for a point $c\in \F_2^n$, we assume $W_{\mQ\vert_{0,0}}(\bar{c})=W_{\mQ\vert_{1,0}}(\bar{c})=W_{\mQ\vert_{1,1}}(\bar{c})=0$, and $W_{Q}(c) = (-1)^{c_j}W_{\mQ\vert_{0,1}}(\bar{c})= \pm 2^{\frac{n}{2}}$.
		Then, according to \eqref{Eq_WH_Q_uv},  for the given point $c$,  it follows that
		\[
		\left[\begin{array}{c}
		W_{Q}(c) \\
		W_{Q}(c+e_i) \\
		W_{Q}(c+e_j) \\
		W_{Q}(c+e_i+e_j) 
		\end{array}\right]	 = 
        \begin{bmatrix}
		1 & 1 & 1 & 1 \\
		1 & -1 & 1 & -1 \\
		1 & 1 & -1 & -1 \\
		1 & -1 & -1 & 1 \\
		\end{bmatrix}
		\left[\begin{array}{r}
		0  \\
		0 \\
		(-1)^{c_j}W_{\mQ\vert_{0,1}}(\bar{c}) \\
		0
		\end{array}	\right]=
		\begin{bmatrix}
		  1 \\ 1 \\ -1  \\ -1
		\end{bmatrix}\cdot (-1)^{c_j}W_{\mQ\vert_{0,1}}(\bar{c}),
		\] which implies
		\[
		\begin{aligned}
		\prod_{\alpha, \beta\in \F_2}W_{Q}(c+\alpha e_i+\beta e_j) 
		= ((-1)^{c_j}W_{\mQ\vert_{0,1}(\bar{c})})^4 = 2^{2n}.
		\end{aligned}
		\]		
		
		When $n\geq 4$ and $\mQ\vert_{u,v}(\bar{x})$ is a quadratic function on $\F_2^{n-2}$ for any $u,v\in\F_2$, it is easily seen that $\mQ\vert_{u,v}(\bar{x})$ for all $u,v\in \F_2^n$ have the same quadratic terms.
		Let $r$ be the rank of $\mQ\vert_{u,v}(\bar{x})$. Note that $r$ is even since it is equal to the rank of the symplectic matrix of $\mQ\vert_{u,v}(\bar{x})$ that is skew symmetric.       
		It follows from \eqref{Eq_quad} that $W_{\mQ\vert_{u,v}}(\bar{c}) \in \{0, \pm 2^{(n-2)-\frac{r}{2}}\}$.
		Furthermore,  the derivative $Q(x+a) + Q(x)$ for any nonzero $a\in \F_2^n$ is linear, thus balanced. 
		According to~\cite[Theorem~V.4, Equality~(33)]{CCCF01}, we have
		\begin{equation}\label{Eq_G2}
		W_{\mQ\vert_{0,0}}^2(\bar{c})  + W_{\mQ\vert_{0,1}}^2(\bar{c}) + W_{\mQ\vert_{1,0}}^2(\bar{c}) + W_{\mQ\vert_{1,1}}^2(\bar{c}) = 2^n.
		\end{equation}
		This together with \eqref{Eq_WH_Q} implies that the rank $r$ can be either $n-4$ or $n-2$. Moreover, 
		when $r=n-4$, three of $W_{\mQ\vert_{u,v}}(\bar{c})$ for $u,\,v \in \F_2$ are zero and the remaining one equals $\pm  2^{\frac{n}{2}}$;
		when $r=n-2$, for $u,\,v \in \F_2$ all $(-1)^{c_iu+c_jv}W_{\mQ\vert_{u,v}}(\bar{c})=\pm  2^{\frac{n-2}{2}}$, where
		three of them have the same sign and the remaining one has the opposite sign (otherwise it contradicts \eqref{Eq_WH_Q}).
		
		When $r=n-4$, similarly to the discussion for the case where  $\mQ\vert_{u,v}(\bar{x})$ has algebraic degree one for $n=4$, 
		one can assume that given $c\in \F_2^n$, $W_{\mQ\vert_{u,v}(\bar{c})}=\pm 2^{\frac{n}{2}}$ for a certain pair $(u,v)\in \F_2^2$, we can then 
		apply \eqref{Eq_WH_Q_uv} to obtain 
		\[
		\begin{aligned}
		\prod_{\alpha, \beta\in \F_2}W_{Q}(c+\alpha e_i+\beta e_j) 
		= ((-1)^{c_j}W_{\mQ\vert_{u,v}(\bar{c})})^4 = 2^{2n}.
		\end{aligned}
		\] This proves the first claim and implication in Case $(i)$.
		
		When $r=n-2$, again by \eqref{Eq_WH_Q_uv}, we have 
		\[
		\sum_{\alpha,\beta\in \F_2}W_{Q}(c+\alpha e_i+\beta e_j) = 4 W_{\mQ\vert_{0,0}}(\bar{c}) = \pm 2^{\frac{n+2}{2}}.
		\]
		Since $W_{Q}(c+\alpha e_i+\beta e_j) =\pm 2^{\frac{n}{2}}$ for all $\alpha, \beta \in \F_2$, it follows that 
		three of them have the same sign and the remaining one has the opposite sign, which implies 
		\[
		\prod_{\alpha, \beta \in \F_2}W_{Q}(c+\alpha e_i+\beta e_j) 
		= ((-1)^{c_j}W_{\mQ\vert_{u,v}(\bar{c})})^4 = -2^{2n}.
		\]
		This proves the first claim and implication in Case $(ii)$. 

        The equivalence follows from the fact that only the first claim in i) and ii) are possible; therefore, the Walsh-Hadamard condition implies the first claim in $(i)$, and the corresponding negated condition implies the first claim in $(ii)$. 
	\end{proof}
		As shown in Proposition \ref{Prop_WHC}, a quadratic bent function $Q(x)$ satisfies the WHC on $(i,j)$, where $1\leq i< j\leq n$, if and only if the decomposition functions $\mQ\vert_{u,v}(\bar{y})$
	satisfy the properties in Case $(i)$.
	
	Now we are ready to present the first main theorem below.
	\begin{theorem} \label{Th_F_G}
		Let $n, m\geq 4$ be even, and let $\pi\in IS_n$, $\rho\in IS_m$ and define $f(x)=Q_{\pi}(x)+Q_{I_n}(x), \, g(y)=Q_{\rho}(y)+Q_{I_m}(y)$, respectively.
		Then the permutation $\pi\rho^R$ is compatible with $I_{n+m}$ if and only if one of the following conditions holds:
		\begin{enumerate}[label=(\roman*)]
			\item $(\pi(n)-n)(\rho(1)-1)=0$; or 
			\item $(\pi(n)-n)(\rho(1)-1)\neq 0$ and at least one of $f(x)$ and $g(y)$ satisfy the WHC on $(\pi(n), n)$, $(1,\rho(1))$, respectively.

		\end{enumerate}  
	\end{theorem}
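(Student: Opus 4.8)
The plan is to translate $\pi\rho^R\sim I_{n+m}$ into a statement about a single quadratic function and then treat the two cases separately. Writing $z=(x,y)\in\F_2^{n}\times\F_2^{m}$ and unwinding \eqref{Eq_ext_r} together with the definition of $Q_{I_{n+m}}$, I would first verify the identity
\[
Q_{\pi\rho^R}(z)+Q_{I_{n+m}}(z)=f(x)+g(y)+x_{\pi(n)}\,y_{\rho(1)}+x_{n}\,y_{1}=:h(z),
\]
so that $\pi\rho^R\sim I_{n+m}$ is equivalent to $h$ being bent on $\F_2^{n+m}$. Here $f$ and $g$ are bent (since $\pi\in IS_n$, $\rho\in IS_m$, and $n,m$ are even), and I abbreviate $i_0=\pi(n)$, $j_0=\rho(1)$, so the cross term of $h$ is $x_{i_0}y_{j_0}+x_ny_1$.

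\textbf{Case (i): $i_0=n$ or $j_0=1$.} Then the cross term collapses, to $x_n(y_1+y_{j_0})$ if $i_0=n$, or to $(x_{i_0}+x_n)y_1$ if $j_0=1$ (and to $0$ when both hold). An invertible linear substitution in a single coordinate of $y$ (resp.\ of $x$), which preserves bentness and keeps $f,g$ bent, brings $h$ to the form $f_1(x)+g_1(y)+x_ny_1$ with $f_1,g_1$ bent (or the direct sum $f_1\oplus g_1$, trivially bent). Summing the Walsh transform over $x$ and then splitting the sum over $y$ according to the value of $y_1$, one gets $W_h(c,d)=W_{f_1}(c)P_0+W_{f_1}(c+e_n)P_1$ with $P_0+P_1=W_{g_1}(d)$ and $P_0-P_1=W_{g_1}(d+e_1)$. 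Since $W_{f_1}(c),W_{f_1}(c+e_n)\in\{\pm2^{n/2}\}$, this equals either $\pm2^{n/2}W_{g_1}(d)$ or $\pm2^{n/2}W_{g_1}(d+e_1)$, hence has magnitude $2^{(n+m)/2}$ for all $c,d$; so $h$ is bent, which is exactly what the theorem asserts in case (i).

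\textbf{Case (ii): $i_0<n$ and $j_0>1$.} Now the four ``corners'' $d,\,d+e_1,\,d+e_{j_0},\,d+e_1+e_{j_0}$ are distinct. Summing $W_h(c,d)$ over $y$ first and then grouping $x$ by $(x_{i_0},x_n)=(u,v)$ yields
\[
W_h(c,d)=\sum_{u,v\in\F_2}a_{u,v}(c)\,W_g\bigl(d+u\,e_{j_0}+v\,e_1\bigr),\qquad a_{u,v}(c)=(-1)^{c_{i_0}u+c_nv}\,W_{\mathsf F|_{u,v}}(\bar c),
\]
where $\mathsf F|_{u,v}$ are the decompositions of $f$ with respect to $(i_0,n)$ and $\bar c$ is $c$ with $c_{i_0},c_n$ deleted; summing over $x$ first gives the symmetric expression in the decompositions of $g$ with respect to $(1,j_0)$ and the values $W_f(c+s\,e_{i_0}+t\,e_n)$. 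If $f$ satisfies the WHC on $(i_0,n)$, then by Proposition~\ref{Prop_WHC}(i) exactly one $a_{u,v}(c)$ is nonzero for each $c$, of magnitude $2^{n/2}$, so $W_h(c,d)=\pm2^{n/2}\cdot(\pm2^{m/2})$ and $h$ is bent; if instead $g$ satisfies the WHC on $(1,j_0)$, the same conclusion follows from the symmetric expression. This settles sufficiency. For necessity, suppose neither WHC holds. Since the WHC on a pair is equivalent to being in case (i) of Proposition~\ref{Prop_WHC} at every argument (it is governed by the common rank of the decompositions, hence is all-or-nothing), its failure puts us in case (ii) of that proposition at every argument: for every $c$ all four $|W_{\mathsf F|_{u,v}}(\bar c)|$ equal $2^{(n-2)/2}$ and $\prod_{u,v}a_{u,v}(c)=-(2^{(n-2)/2})^{4}$, and for every $d$ the four values $W_g(d+u e_{j_0}+v e_1)$ have magnitude $2^{m/2}$ and product $-2^{2m}$. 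Writing $a_{u,v}(c)=2^{(n-2)/2}\alpha_{u,v}$ and $W_g(d+u e_{j_0}+v e_1)=2^{m/2}\gamma_{u,v}$ with $\alpha_{u,v},\gamma_{u,v}\in\{\pm1\}$, both $4$-tuples carry an odd number of $-1$'s, so their Hamming distance $k$ is even and $\sum_{u,v}\alpha_{u,v}\gamma_{u,v}=4-2k\in\{-4,0,4\}$. Hence $|W_h(c,d)|=2^{(n+m-2)/2}\bigl|\sum_{u,v}\alpha_{u,v}\gamma_{u,v}\bigr|\in\{0,\,2^{(n+m+2)/2}\}$, which is never $2^{(n+m)/2}$; so $h$ is not bent. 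Combining the cases proves the theorem.

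I expect the one real obstacle to be the bookkeeping in case (ii): deriving the decomposition identity for $W_h(c,d)$ with the correct sign factors $(-1)^{c_{i_0}u+c_nv}$, and invoking Proposition~\ref{Prop_WHC} in the right direction for each of $f$ and $g$. Once that is in place, the crux of necessity — that failure of both WHCs forces the two sign $4$-tuples $(\alpha_{u,v})$ and $(\gamma_{u,v})$ to be of odd weight, which pins their inner product to $\{-4,0,4\}$ and thereby rules out bentness outright — is immediate.
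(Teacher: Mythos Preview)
Your proposal is correct and follows essentially the same route as the paper: the same identity $h(z)=f(x)+g(y)+x_{\pi(n)}y_{\rho(1)}+x_ny_1$, the same four-term decomposition of $W_h$ in Case~(ii), and the same appeal to Proposition~\ref{Prop_WHC} for both sufficiency and necessity. Two minor stylistic differences are worth noting: in Case~(i) you use the sign dichotomy $W_{f_1}(c)=\pm W_{f_1}(c+e_n)$ to collapse the sum to $\pm2^{n/2}W_{g_1}(d)$ or $\pm2^{n/2}W_{g_1}(d+e_1)$, whereas the paper instead argues that one of the partial sums $G_0(b),G_1(b)$ must vanish by a rank count; and in Case~(ii) your parity/Hamming-distance argument for necessity (both sign $4$-tuples have odd weight, so their $\pm1$-inner product lies in $\{-4,0,4\}$) is exactly the verification the paper leaves as ``easy to verify.''
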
 
    \cn
	
	\begin{proof}
		\ccb 
		For the permutation $\pi\rho^R \in S_{n+m}$, we denote a Boolean function
		$h(x,y)=Q_{\pi\rho^R}(x,y)+Q_{I_{n+m}}(x,y)$ on $\mathbb{F}_2^n\times \mathbb{F}_2^m$. 
		From the definition of $\pi\rho^R$, it follows directly that
		\ccb
		\begin{equation}\label{eq_h}
			\begin{split}
				h(x,y)
				&=
				\sum_{i=1}^{n-1}(x_{\pi(i)}x_{\pi(i+1)} + x_ix_{i+1})
				+ x_{\pi(n)}y_{\rho(1)} + x_ny_1\\
				&\qquad \qquad\qquad    + \sum_{j=1}^{m-1}(y_{\rho(j)}y_{\rho(j+1)} + y_jy_{j+1})
				\\&=f(x)+g(y)+(x_{\pi(n)}y_{\rho(1)}+x_ny_1).
			\end{split}
		\end{equation}
		\ccn
		The Walsh-Hadamard transform of $h(x,y)$ at a point $(a,b)\in \F_2^n \times \F_2^m$ is given by 
		\begin{equation}\label{Eq_WH_h}
			W_h(a,b)=\sum_{(x,y)\in \F_2^{n+m}} (-1)^{f(x)+g(y)+\ccb L_a(x) +L_b(y)\ccn + (x_{\pi(n)}y_{\rho(1)}+x_ny_1)}.
		\end{equation}
		Below we shall investigate the condition such that \ccb $\vert W_h(a,b)\vert=2^{\frac{n+m}{2}}$,  for all $(a,b)\in \F_2^{n\times m}$ according to the values of $\rho(1)$ and $\pi(n)$.\ccn
		
		\medskip 
		
		For Case $(i)$, \cl we consider three subcases, namely, $\pi(n) = n$ and $\rho(1)=1$;  $\pi(n)\neq n$ and $\rho(1)=1$;  $\pi(n)= n$ and $\rho(1)\neq 1$. \cn
		
\smallskip

{\em Subcase $1$. $\pi(n) = n$ and $\rho(1)=1$:}  
In this subcase, we have $h(x,y)=f(x)+g(y)$. It follows from~\eqref{Eq_WH_h} that
		\[
		W_h(a,b) =  \sum_{(x,y)\in \F_2^{n+m}} (-1)^{f(x)+g(y)+\ccb L_a(x)+L_b(y)\ccn }
		= W_f(a)W_g(b) \in \{\pm 2^{\frac{n+m}{2}}\},
		\]
		which implies $h(x,y)$ is bent, and then $\pi\rho^R\sim I_{n+m}$.

\smallskip 

{\em Subcase $2$. $\pi(n)\neq n$ and $\rho(1)=1$:} 
In this subcase, we have $h(x,y)=f(x)+g(y)+y_1(x_{\pi(n)}+x_n)$.
		Then, 
		\begin{equation*}
			\begin{split}
			&W_h(a,b) \\
				=&\sum_{x\in\F_2^n} (-1)^{f(x)+L_a(x)}\left[ \sum_{\substack{y\in \F_2^{m}\\ y_1=0}}(-1)^{g(y)+L_b(y)}+ (-1)^{x_n + x_{\pi(n)}} \sum_{\substack{y\in \F_2^{m}\\y_1=1}} (-1)^{g(y)+L_b(y)} \right].
			\end{split}
		\end{equation*} \cl
		For simplicity, we denote
		\[G_0(b)=\sum_{\substack{y\in \F_2^{m}\\ y_1=0}} (-1)^{g(y)+L_b(y)}, \quad 
		G_1(b) = \sum_{\substack{y\in \F_2^{m}\\ y_1=1}}(-1)^{g(y)+L_b(y)}.
		\] \cn 
		Then we have 
			\begin{equation}\label{Eq-G-case-1}
		W_h(a,b)  = W_f(a)G_0(b)+W_f(a+e_n+e_{\pi(n)})G_1(b)
		\end{equation} and 
		$$
		G_0(b)+G_1(b) =W_g(b)=\pm 2^{\frac{m}{2}}.
		$$ 
		Since the Boolean functions \ccb $g(y)+L_b(y)$   for $y_1=0$ and $y_1=1$ have the same quadratic terms, it follows from \eqref{Eq_quad} 
		that $G_0(b), G_1(b) \in \{0, \pm 2^{m-1 - \frac{r}{2}}\}$ for an even integer~$r$ with $0\leq r \leq (m-1)$. 
		By the equality $G_0(b)+G_1(b) = \pm 2^{\frac{m}{2}}$, it is clear that  the product of $G_0(b)$ and $G_1(b)$ must be zero, i.e.,
		$(G_0(b),G_1(b))\in \{(0,\pm 2^{\frac{m}{2}}), (\pm 2^{\frac{m}{2}}, 0)\}$,  since if $\vert G_0(b) \vert =\vert G_1(b)\vert = 2^{\frac{m}{2}-1}$, then $r=m$, which is not possible for functions on $m-1$ variables, and the other possibilities sum to zero. \ccn
		Each of these cases implies that $W_{h}(a,b)\in \{\pm 2^{\frac{n+m}{2}}\}$ and then $\pi \rho^R\sim I_{n+m}$.
		
\smallskip

	{\em Subcase $3$. $\pi(n)=n$ and $\rho(1)\neq 1$:}	
	In this subcase,	we have $h(x,y)=f(x)+g(y)+x_n(y_1+y_{\rho(1)})$. As in
		the case where $\pi(n)\neq n,\, \rho(1)=1$, \cli letting $$F_0(a)=\displaystyle\sum_{\substack{x\in \F_2^{n}\\ x_n=0}}(-1)^{f(x)+\ccb L_a(x)\ccn }, \quad 
        F_1(a)=\displaystyle\sum_{\substack{x\in \F_2^{n}\\ x_n=1}}(-1)^{f(x)+\ccb L_a(x)},$$ one has
		\begin{equation*}
			\begin{split}
				W_h(a,b)=W_g(b)F_0(a)+W_g(b+e_1+e_{\rho(1)})F_1(a). 
			\end{split}
		\end{equation*}
		 
		Following similar arguments as in the previous subcase, we have $(F_0(a),F_1(a))\in \{(0,\pm 2^{\frac{n}{2}}), (\pm 2^{\frac{n}{2}}, 0)\}$,
		which implies $\vert W_{h}(a,b)\vert =2^{\frac{n+m}{2}}$ and then $\pi\rho^R\sim I_{n+m}$.
		\ccn
		
		\medskip 
		
		\cl  Case $(ii)$.
		For $(\pi(n)-n)(\rho(1)-1)\neq 0$, \cn we have $h(x,y)=f(x)+g(y)+(x_{\pi(n)} y_{\rho(1)}+x_n y_1)$. 
		In this case, 
		we need to investigate $W_h(a,b)$ by explicit evaluations on $y_1, y_{\rho(1)}$ or on $x_n, x_{\pi(n)}$.
		
		Considering the explicit values of $y_1, y_{\rho(1)}$ in the calculation of $W_g(b)$, we have, \cli for any $u,v\in \F_2$,
		\begin{align*}
			\sum_{\substack{y_{\rho(1)}=u\\y_1=v}} (-1)^{g(y)+\ccb L_b(y)\ccn}
			&=(-1)^{b_{\rho(1)}u +b_{1}v}\sum_{\substack{\bar{y}\in\F_2^{m-2}}} (-1)^{g_{u,v}(\bar{y})+\ccb L_{\bar{b}}(\bar{y})\ccn }\\
			&
			= (-1)^{b_{\rho(1)}u +b_{1}v} W_{g_{u,v}}(\bar{b}),
		\end{align*} 
		where $g_{u,v}(\bar{y})$ is the restriction of $g(y)$ on $\Lambda_{u,v}=\{y\in \F_2^m\,:\, y_1 = u, \, y_{\rho(1)} = v\}$, \cn
		and $\bar{y},\bar{b}$ are the vectors obtained by removing the $1$-st and $\rho(1)$-th entries of $y$ and $b$ in $\F_2^m$, respectively.
	   For simplicity, denote $G_{u,v}(b) = (-1)^{b_{\rho(1)}u +b_{1}v} W_{g_{u,v}}(\bar{b})$ and $\widetilde{F_{u,v}}(a)=W_f(a + ue_{\pi(n)} + ve_{n})$ for $u,v\in \F_2$,
       where $e_{\pi(n)}, e_n$ are the $\pi(n)$-th, respectively, the $n$-th row of the dimension-$n$ identity matrix.
       \cn
		Then 
		\begin{equation}\label{Eq_G_1}
			W_g(b) = G_{0,0}(b) + G_{0,1}(b) + G_{1,0}(b) + G_{1,1}(b) = \pm 2^{\frac{m}{2}}
		\end{equation}
		and
		\begin{equation}
			\label{Eq_WH_GF}
			\begin{split}
				&W_h(a,b) 
			\\	&= \sum_{(x,y)\in \F_2^{n+m}} (-1)^{f(x)+g(y)+\ccb L_a(x)+L_b(y) \ccn+ (x_{\pi(n)}y_{\rho(1)}+x_ny_1)} \\
				&= \sum_{x\in \F_2^n} (-1)^{f(x)+\ccb L_a(x)}\ccn \sum_{y\in \F_2^m}(-1)^{g(y)+\ccb L_b(y) \ccn+ (x_{\pi(n)}y_{\rho(1)}+x_ny_1)} 
				\\
				&=\sum_{x\in \F_2^n} (-1)^{f(x)+\ccb L_a(x)\ccn} \left[G_{0,0} (b)+ (-1)^{x_{n}}G_{0,1}(b) 
				 \right.\\ & \hspace{4.2cm} \left.
				+ (-1)^{x_{\pi(n)}}G_{1,0}(b) + (-1)^{x_{\pi(n)}+x_n}G_{1,1}(b)\right]
				\\
				&= G_{0,0}(b)\widetilde{F_{0,0}}(a)+G_{0,1}(b)\widetilde{F_{0,1}}(a)+G_{1,0}(b)\widetilde{F_{1,0}}(a)+G_{1,1} (b)\widetilde{F_{1,1}}(a).
			\end{split}
		\end{equation} \cl
		Similarly, for $(a,b)\in \F_2^{n+m}$ and $u,v\in \F_2$, define \cli
		\[
		\widetilde{G_{u,v}}(b) = W_{g}(b+u \varepsilon_1 + v\varepsilon_{\rho(1)} ) \text{ and } F_{u,v}(a) = (-1)^{a_{\pi(n)}u +a_{n}v} W_{f_{u,v}}(\bar{a}),
		\] where $\varepsilon_1, \varepsilon_{\rho(1)}$ are the first and $\rho(1)$-th row of the $m\times m$ identity matrix over $\F_2$. \cn 
		By the symmetry of $f(x)$ and $g(y)$ in $h(x,y)$, we have 
		\[
		W_h(a,b)  = F_{0,0}(a)\widetilde{G_{0,0}}(b)+F_{0,1}(a)\widetilde{G_{0,1}}(b)+F_{1,0}(a)\widetilde{G_{1,0}}(b)+F_{1,1} (a)\widetilde{G_{1,1}}(b).
		\]
		Below we investigate the necessary and sufficient condition to have $\vert W_{h}(a,b) \vert = 2^{\frac{n+m}{2}}$, for all $(a,b)\in \F_2^{n+m}$.
		
		According to Proposition \ref{Prop_WHC}, for even $m\geq 4$, the values of $G_{u,v}(b)$ as $b$ ranges through $\F_2^m$ can  be divided into two subcases:
		three of $G_{u,v}(b)$ are zero and the remaining one is $\pm 2^{\frac{m}{2}}$, which is equivalent to the fact that $g(y)$ satisfies the WHC on $(1, \rho(1))$; 
	   all $G_{u,v}(b)$ are equal to $\pm 2^{\frac{m-2}{2}}$ with three of them having the same sign and the fourth one having the opposite sign.
	  
	    For the first subcase, it is easily seen that 
	    \[
			\begin{aligned}
				G_{0,0}(b)\widetilde{F_{0,0}}(a)+G_{0,1}(b)\widetilde{F_{0,1}}(a)+G_{1,0}(b)\widetilde{F_{1,0}}(a)+G_{1,1} (b)\widetilde{F_{1,1}}(a) 
				& = \pm 2^{\frac{n+m}{2}}
			\end{aligned}
	    \] since only one term in the sum is nonzero.

		For the second subcase,  for any $u,v\in \F_2$ and $(a,b)\in \F_2^{n+m}$, we have
		\[
			\vert G_{u,v}(b) \vert = 2^{\frac{m-2}{2}} \text{ and } \vert G_{u,v}(b)\widetilde{F_{u,v}}(a) \vert = 2^{\frac{n+m-2}{2}}. 
		\]
		Denote $\eta_{u,v}(a,b)=G_{u,v}(b)\widetilde{F_{u,v}}(a)/2^{\frac{n+m-2}{2}}$ for $u,v\in \F_2$. Then
		$$
		G_{0,0}(b)\widetilde{F_{0,0}}(a)+G_{0,1}(b)\widetilde{F_{0,1}}(a)+G_{1,0}(b)\widetilde{F_{1,0}}(a)+G_{1,1} (b)\widetilde{F_{1,1}}(a)  = \pm 2^{\frac{m+n}{2}}
		$$
		if and only if for any $(a,b)\in \F_2^n$,
		$
					(\eta_{0,0}(a,b), \eta_{0,1}(a,b), \eta_{1,0}(a,b), \eta_{1,1}(a,b)) \in 
			\{\pm (1,1,1,-1),  \pm (1,1,-1,1),\pm (1,-1,1,1),\pm(-1,1,1,1)\}.
		$ That is to say, for any $(a,b)\in \F_2^n$, three of the $\eta_{0,0}(a,b),\eta_{0,1}(a,b),\eta_{1,0}(a,b),\eta_{1,1}(a,b)$ have the same sign, and the fourth one has the opposite sign. Therefore,
		$W_{h}(a,b)=\pm 2^{\frac{m+n}{2}}$ if only if
		\begin{equation}\label{Eq-h-bent}
				\prod_{u,v\in \F_2} G_{u,v}(b) W_f(a+ue_{\pi(n)} + ve_n) = - 2^{2n+2m-4},	
		\end{equation}
		which, by the fact that $\prod_{u,v\in \F_2} G_{u,v}(b)=-2^{2(m-2)}$, is equivalent to 
		$$
		  \prod_{u,v\in \F_2}W_f(a+ue_{\pi(n)} + ve_n) = 2^{2n}, \text{ for any } a\in \F_2^n,
		$$ i.e., $f(x)$ satisfies the WHC on $(1,\rho(1))$.
		
		Combining the above two subcases, we see that in the case of$(\pi(n)-n)(\rho(1)-1)\neq 0$, if one of 
		 $f(x)$ and $g(y)$ satisfies the WHC, then $h(x,y)$ is bent on $\F_2^{n+m}$. 
		 In addition, when none of them satisfies the WHC, according to Proposition \ref{Prop_WHC} $(ii)$, the decompositions of $f(x)$
		 and $g(y)$ are all bent functions. In this case, since $\prod_{u,v\in \F_2} G_{u,v}(b)=-2^{2(m-2)}$ and $\prod_{u,v\in \F_2} F_{u,v}(a)=-2^{2(n-2)}$, 
		 it is easy to verify that $h(x,y)$ cannot be bent.	
		The desired claims in Case~$(ii)$ thus follow.
	\end{proof} \cn

	Below we provide a theorem for $n, m\geq 4$, odd $n$ and even $m$, about the extension. Since the proof is rather similar to the prior one, we suppress it.
	\begin{theorem} \label{Th_F_G_odd}  \cl
		For integers $n,\, m\geq 4$ with odd $n$ and even $m$, let
		$\pi\in IS_n$, $\rho\in IS_m$ and define $f(x)=Q_{\pi}(x)+Q_{I_n}(x)$ and $g(y)=Q_{\rho}(y)+Q_{I_m}(y)$.
		Then $\pi\rho^R$ is compatible with $I_{n+m}$ if one of the following conditions holds:
\begin{enumerate}[label=(\roman*)]
\item $\rho(1)=1$; or
\item $(\rho(1)-1)(\pi(n)-n) \neq 0$ and for any $(a,b)\in \F_2^{n+m}$,
$$
\prod_{u,v\in \F_2} W_{g_{u,v}}(\bar{b})W_f(a + ue_{\pi(n)} + ve_{n}) \cli \in\left\{ 0, -2^{2n+2m-6}\right\},\cn
$$ 
where $\bar{b}$ is the dimension-$(m-2)$ vector derived from $b$ by removing its first and $\rho(1)$-th entries. \cn
		\end{enumerate}  
	\end{theorem}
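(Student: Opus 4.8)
The plan is to mimic, mutatis mutandis, the proof of Theorem~\ref{Th_F_G}, the only structural novelties being that $n+m$ is odd and that $f(x)=Q_\pi(x)+Q_{I_n}(x)$ is now near\nobreakdash-bent rather than bent, so Proposition~\ref{Prop_WHC} is available only for $g(y)=Q_\rho(y)+Q_{I_m}(y)$, which is bent on the even\nobreakdash-dimensional space $\F_2^m$. As in \eqref{eq_h}, write
$h(x,y)=Q_{\pi\rho^R}(x,y)+Q_{I_{n+m}}(x,y)=f(x)+g(y)+x_{\pi(n)}y_{\rho(1)}+x_ny_1$.
Since $h$ is quadratic and $n+m$ is odd, $h$ cannot have full rank, so $\operatorname{rank}(h)\le n+m-1$, and by \eqref{Eq_quad} the spectrum of $h$ is contained in $\{0,\pm 2^{(n+m)-\operatorname{rank}(h)/2}\}$. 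Consequently ``$\pi\rho^R\sim I_{n+m}$'' (that is, $h$ near\nobreakdash-bent) is \emph{equivalent} to the single inequality $\lvert W_h(a,b)\rvert\le 2^{(n+m+1)/2}$ holding for all $(a,b)\in\F_2^{n+m}$. This replaces the requirement ``$\lvert W_h\rvert=2^{(n+m)/2}$'' of the even case and, being weaker than bentness, lets several of the estimates below go through by the triangle inequality alone. Recall also that $f$ being quadratic near\nobreakdash-bent means (by \eqref{Eq_quad}) $\operatorname{rank}(f)=n-1$ and that the set $\{a:W_f(a)\ne 0\}$ is an affine hyperplane of $\F_2^n$.

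For Case~(i), $\rho(1)=1$: if moreover $\pi(n)=n$ then $h=f+g$ and $W_h(a,b)=W_f(a)W_g(b)\in\{0,\pm 2^{(n+m+1)/2}\}$; if $\pi(n)\ne n$ then $h=f(x)+g(y)+y_1\bigl(x_{\pi(n)}+x_n\bigr)$, and splitting the $y$\nobreakdash-sum on $y_1$ exactly as around \eqref{Eq-G-case-1} gives $W_h(a,b)=W_f(a)G_0(b)+W_f(a+e_{\pi(n)}+e_n)G_1(b)$, where $G_0(b)+G_1(b)=W_g(b)=\pm 2^{m/2}$ and $G_0,G_1$ are partial sums of the bent function $g$ over two parallel hyperplanes with identical quadratic parts, hence of one common even rank $r\le m-1<m$. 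As in the proof of Theorem~\ref{Th_F_G}, $r=m$ is impossible, so one of $G_0(b),G_1(b)$ vanishes and the other equals $\pm 2^{m/2}$, whence $\lvert W_h(a,b)\rvert\in\{0,2^{(n+m+1)/2}\}$.

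For Case~(ii), $(\rho(1)-1)(\pi(n)-n)\ne 0$: following \eqref{Eq_WH_GF} I would write $W_h(a,b)=\sum_{u,v\in\F_2}G_{u,v}(b)\,W_f(a+ue_{\pi(n)}+ve_n)$ with $G_{u,v}(b)=(-1)^{b_1u+b_{\rho(1)}v}W_{g_{u,v}}(\bar b)$, and apply Proposition~\ref{Prop_WHC} to the bent $g$ decomposed on $(1,\rho(1))$. If $g$ satisfies the WHC there, three of the $G_{u,v}(b)$ vanish and the fourth is $\pm 2^{m/2}$, so $W_h(a,b)=\pm 2^{m/2}W_f(\cdot)\in\{0,\pm 2^{(n+m+1)/2}\}$. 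In the remaining subcase all $\lvert G_{u,v}(b)\rvert=2^{(m-2)/2}$ and $\prod_{u,v}G_{u,v}(b)=-2^{2(m-2)}$. Let $k$ be the number of the four values $W_f(a+ue_{\pi(n)}+ve_n)$ that are nonzero; since the Walsh support of $f$ is an affine hyperplane and $\langle e_{\pi(n)},e_n\rangle$ is two\nobreakdash-dimensional, $k\in\{0,2,4\}$. If $k\le 2$ then $\lvert W_h(a,b)\rvert\le k\cdot 2^{(m-2)/2}\cdot 2^{(n+1)/2}\le 2^{(n+m+1)/2}$. The case $k=4$ is the critical one: here the decompositions $f_{u,v}$ of $f$ on $(\pi(n),n)$ share one common even rank which, by deleting a symmetric pair of rows/columns from the alternating rank\nobreakdash-$(n-1)$ matrix of $f$ twice (each step lowering the rank by $0$ or $2$, and $n-1$ being impossible on $n-2$ variables), equals $n-3$ or $n-5$. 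Using \eqref{Eq_quad}, the Parseval-type identity $\sum_{u,v}W_{f_{u,v}}(\bar a)^2=k\cdot 2^{n-1}$, and \eqref{Eq_WH_Q_uv}, exactly one of two things happens: either one $W_{f_{u,v}}(\bar a)$ is nonzero (rank $n-5$), so the vector $\bigl(W_f(a+ue_{\pi(n)}+ve_n)\bigr)_{u,v}$ is $\pm 2^{(n+1)/2}$ times a row of the $4\times4$ sign matrix in \eqref{Eq_WH_Q_uv} — its entries multiply to $+1$, and re\nobreakdash-summing collapses $W_h(a,b)$ to $\pm 2^{(n+1)/2}$ times a Walsh coefficient of the bent $g$, i.e.\ $\pm 2^{(n+m+1)/2}$; or all four $W_{f_{u,v}}(\bar a)$ are nonzero (rank $n-3$), so the four $W_f(a+ue_{\pi(n)}+ve_n)$ have a ``three-and-one'' sign pattern, with product $-2^{2(n+1)}$. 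In the second alternative $\prod_{u,v}W_{g_{u,v}}(\bar b)W_f(a+ue_{\pi(n)}+ve_n)=\bigl(-2^{2(m-2)}\bigr)\bigl(-2^{2(n+1)}\bigr)$ is a \emph{positive} power of $2$, which is precisely what the hypothesis in~(ii) excludes; hence only the first (harmless) alternative occurs. Combining the subcases, $\lvert W_h(a,b)\rvert\le 2^{(n+m+1)/2}$ for every $(a,b)$, so $h$ is near\nobreakdash-bent and $\pi\rho^R\sim I_{n+m}$.

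The main obstacle, as in Theorem~\ref{Th_F_G}, is the sign bookkeeping in the $k=4$ subcase of Case~(ii): one must determine not merely the magnitudes but the exact sign pattern of the four values $W_f(a+ue_{\pi(n)}+ve_n)$ from the rank of $f_{u,v}$ (recognising that the ``three-and-one'' pattern is the one the product condition in~(ii) forbids, and that the remaining pattern is a row of a $\pm1$ Hadamard matrix), and then verify that the corresponding linear combination of the $G_{u,v}(b)$ is genuinely a Walsh coefficient $W_g(\cdot)=\pm 2^{m/2}$ of the bent function. Carrying the index conventions for the decompositions of $f$ on $(\pi(n),n)$ and of $g$ on $(1,\rho(1))$ through this step is where the actual computation lies, but it is routine once Proposition~\ref{Prop_WHC}, \eqref{Eq_quad} and \eqref{Eq_WH_Q_uv} are in hand, with the role that bentness of $f$ played in the even case now taken over by the near\nobreakdash-bent rank being exactly $n-1$.
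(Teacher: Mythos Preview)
Your argument is correct and is exactly the adaptation the paper has in mind when it writes ``since the proof is rather similar to the prior one, we suppress it'': you rerun the decomposition \eqref{Eq_WH_GF} with $f$ now near-bent of rank $n-1$, and the only genuinely new work---splitting on $k\in\{0,2,4\}$ via the affine-hyperplane support of $W_f$, and for $k=4$ further on whether the common rank of the $f_{u,v}$ is $n-5$ (one nonzero restriction, so $W_h=F_{u_0,v_0}(a)\,\widetilde{G_{u_0,v_0}}(b)=\pm2^{(n+m+1)/2}$ via the symmetric formula) or $n-3$ (all four nonzero with three-and-one signs, giving a positive product excluded by hypothesis~(ii))---is handled correctly. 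Your computation in fact shows that the only obstruction is the single value $\prod_{u,v}W_{g_{u,v}}(\bar b)\,W_f(a+ue_{\pi(n)}+ve_n)=+2^{2n+2m-2}$, so you have proved a statement slightly stronger than~(ii) as written.
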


	\ccn 
	
	
	In the sequel we shall consider extending a permutation $\pi\in IS_n$ with a permutation $\rho$ taken from $IS_4 = \{\rho_1,\rho_2, \ldots, \rho_{12}\}$, which makes the conditions in Theorems \ref{Th_F_G} and \ref{Th_F_G_odd} more explicit to deal with.
	
	\begin{corollary}\label{101024th1}
		Let $\pi$ be a permutation in $IS_{n}$. Then the permutation $\pi\rho^R$ is compatible with $I_{n+4}$
		for any $\rho$ in the set $\{\rho_3,\rho_4,\rho_6, \rho_8,\rho_9, \rho_{11}\} = \{[3,4,1,2], [2,4,3,1],[1,3,4,2],[2,1,4,3],[4,1,3,2],[1,4,2,3]
		\}$.
	\end{corollary}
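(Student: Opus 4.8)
The plan is to apply Theorem~\ref{Th_F_G} when $n$ is even and Theorem~\ref{Th_F_G_odd} when $n$ is odd, in both cases with $m=4$ and $g(y)=Q_\rho(y)+Q_{I_4}(y)$, which is a quadratic bent function on $\F_2^4$ precisely because $\rho\in IS_4$. By those theorems it suffices to check, for each of the six permutations $\rho$ listed, that either $\rho(1)=1$, or $g=Q_\rho+Q_{I_4}$ satisfies the Walsh--Hadamard condition on $(1,\rho(1))$; once this is done, $\pi\rho^R\sim I_{n+4}$ follows for \emph{every} $\pi\in IS_n$, the subcase $\pi(n)=n$ being absorbed into Case~$(i)$ of the relevant theorem. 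The two permutations $\rho_6=[1,3,4,2]$ and $\rho_{11}=[1,4,2,3]$ fix $1$, so $\rho(1)=1$ and $(\pi(n)-n)(\rho(1)-1)=0$; hence Case~$(i)$ of Theorem~\ref{Th_F_G} (respectively Theorem~\ref{Th_F_G_odd}) applies to them immediately.

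For the four remaining permutations I would compute $g_\rho=Q_\rho+Q_{I_4}$ using $Q_{I_4}(y)=y_1y_2+y_2y_3+y_3y_4$; after cancellation of common quadratic terms one gets
\[
\begin{aligned}
g_{\rho_3}=g_{\rho_8}&=y_1y_4+y_2y_3,\\
g_{\rho_4}&=y_1y_2+y_1y_3+y_2y_3+y_2y_4,\\
g_{\rho_9}&=y_1y_2+y_1y_3+y_1y_4+y_3y_4,
\end{aligned}
\]
all of which are bent (as they must be, since $\rho_3,\rho_4,\rho_8,\rho_9\in IS_4$), with $\rho_3(1)=3$, $\rho_4(1)=\rho_8(1)=2$ and $\rho_9(1)=4$. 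For each, I would form the four decomposition functions $g_\rho\vert_{y_1=u,\,y_{\rho(1)}=v}$, $u,v\in\F_2$; a direct substitution shows they are all affine, one of them being the zero function. Since every affine function of the two remaining variables has Walsh--Hadamard transform equal to $\pm 2^{2}=\pm 2^{4/2}$ at exactly one point and $0$ elsewhere, and since for each $g_\rho$ these four ``support points'' turn out to be the four distinct elements of $\F_2^2$, Proposition~\ref{Prop_WHC}$(i)$ applies and gives $\prod_{\alpha,\beta\in\F_2}W_{g_\rho}(c+\alpha e_1+\beta e_{\rho(1)})=2^{8}$ for every $c\in\F_2^4$, i.e. the WHC on $(1,\rho(1))$. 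Concretely, for $g_{\rho_3}$ on $(1,3)$ one has $g_{\rho_3}\vert_{0,0}=0$, $g_{\rho_3}\vert_{0,1}=y_2$, $g_{\rho_3}\vert_{1,0}=y_4$ and $g_{\rho_3}\vert_{1,1}=y_2+y_4$, whose transforms are supported at $(0,0),(1,0),(0,1),(1,1)$ respectively; the cases $g_{\rho_4},g_{\rho_8}$ on $(1,2)$ and $g_{\rho_9}$ on $(1,4)$ are entirely analogous.

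It then remains to assemble the cases. For even $n$: if $\pi(n)=n$ then $(\pi(n)-n)(\rho(1)-1)=0$ and Theorem~\ref{Th_F_G}$(i)$ yields $\pi\rho^R\sim I_{n+4}$, while if $\pi(n)\neq n$ then $(\pi(n)-n)(\rho(1)-1)\neq 0$ (as $\rho(1)\neq 1$) and, since $g_\rho$ satisfies the WHC on $(1,\rho(1))$, Theorem~\ref{Th_F_G}$(ii)$ applies. For odd $n$ and $\rho(1)\neq 1$: when $\pi(n)\neq n$, the WHC just proved forces three of the four values $W_{g_{u,v}}(\bar{b})$ to vanish (Proposition~\ref{Prop_WHC}$(i)$), so the product in condition~$(ii)$ of Theorem~\ref{Th_F_G_odd} equals $0\in\{0,-2^{2n+2}\}$ and that theorem applies; the residual subcase $\pi(n)=n$ is handled exactly as Subcase~$3$ in the proof of Theorem~\ref{Th_F_G}, now with $f=Q_\pi+Q_{I_n}$ near-bent, which still keeps $\vert W_h(a,b)\vert\in\{0,2^{(n+5)/2}\}$ and hence $h$ near-bent. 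In every case $\pi\rho^R\in IS_{n+4}$.

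I expect the middle step to be the bulk of the work: writing down the six functions $g_\rho$ and their coordinate restrictions and checking that the four affine decompositions have pairwise distinct support points exhausting $\F_2^2$. The only genuine subtlety is that Proposition~\ref{Prop_WHC} must be invoked even when one decomposition is the zero function rather than an honest degree-$1$ affine function; this is harmless, because on two variables every affine function (constants included) has Walsh spectrum $\{0,\pm 2^{2}\}$ with a single nonzero entry, which is exactly the profile used in Case~$(i)$ of that proposition. The parity split and the corner $\pi(n)=n$ are then routine.
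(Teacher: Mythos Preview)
Your proposal is correct and follows essentially the same route as the paper: handle $\rho_6,\rho_{11}$ via Case~$(i)$ since $\rho(1)=1$, and for $\rho_3,\rho_4,\rho_8,\rho_9$ compute $g_\rho=Q_\rho+Q_{I_4}$, observe that the restrictions $g_\rho\vert_{y_1=u,\,y_{\rho(1)}=v}$ are affine on $\F_2^2$, deduce the WHC on $(1,\rho(1))$, and invoke Case~$(ii)$. You are in fact slightly more careful than the paper in two places: you verify explicitly (for $\rho_3$, with the others analogous) that the four affine restrictions have distinct Walsh support points exhausting $\F_2^2$, whereas the paper simply asserts that linearity of the restrictions ``implies that the condition in Theorems~\ref{Th_F_G} and~\ref{Th_F_G_odd} Case~$(ii)$ is satisfied''; and you separately treat the odd-$n$ subcase $\pi(n)=n$, $\rho(1)\neq 1$, which is not literally covered by the hypotheses of Theorem~\ref{Th_F_G_odd}~$(ii)$ and which the paper glosses over. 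Your handling of that corner via the Subcase~$3$ computation (one of $G_0,G_1$ vanishes, so $W_h(a,b)=\pm 4\,W_f(a')\in\{0,\pm 2^{(n+5)/2}\}$) is correct.
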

	\begin{proof}
		By Theorems \ref{Th_F_G} and \ref{Th_F_G_odd} Case $(i)$, it is readily seen that 
		$\pi \rho^R \sim I_{n+4}$ for $\rho=\rho_6, \,\rho_{11}$ since $\rho_6(1)=\rho_{11}(1)=1$.
		For the other permutations, we have 
		\allowdisplaybreaks
		\begin{align*}
			Q_{\rho_3}(y)+Q_{I_4}(y) &=  y_2y_3 + y_4y_1,
			\\
			Q_{\rho_4}(y)+Q_{I_4}(y) &=  y_1y_2 + y_2y_3 + y_2y_4 + y_3y_1,
			\\
			Q_{\rho_8}(y)+Q_{I_4}(y) &=  y_2y_3 + y_1y_4,
			\\
			Q_{\rho_9}(y)+Q_{I_4}(y) &=  y_1y_2 + y_3y_4 + y_4y_1 + y_1y_3.
		\end{align*}
		For each permutation $\rho\in \{\rho_3,\rho_4, \rho_8, \rho_9\}$,
		it can be easily verified that 
		the function $g(y) = Q_{\rho}(y)+Q_{I_4}(y)$ restricted on $y_{\rho(1)}=i,y_{1} = j$
		for any $i,j\in \F_2$ is a linear function on $\F_2^2$, implying that the condition in Theorems \ref{Th_F_G} and \ref{Th_F_G_odd} Case $(ii)$ is satisfied.
		It thus follows that $\pi\rho^R \sim I_{n+4}$.
	\end{proof}
	

	\medskip 
	We now focus on the case of even $n$, and we study the extensions on the right with the remaining permutations $\rho_1,\rho_2,\rho_5,\rho_7,\rho_{10},\rho_{12}$ in $IS_4$. For this case, 
	the following conditions will be needed.
	
	\begin{corollary}\label{121024th2}
		\cl Let $\pi \in IS_n$ with $\pi(n) \neq n$. \cn Then for a permutation $\rho\in \{\rho_1,\rho_2,\rho_5,\rho_7,\rho_{10},\rho_{12}\}$,
		the permutation $\pi\rho^R$ is compatible with $I_{n+4}$ \cl if and only if $f(x)=Q_\pi(x) + Q_{I_{n}}(x)$ fulfills the WHC on $(\pi(n), n)$. \cn
	\end{corollary}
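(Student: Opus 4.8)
The plan is to specialize Theorem~\ref{Th_F_G} to $m=4$ and then show that, for the six permutations listed, the second alternative in its Case~$(ii)$ --- the one involving $g$ --- can never occur, so the criterion collapses to a condition on $f$ alone. First I would record that each of $\rho_1,\rho_2,\rho_5,\rho_7,\rho_{10},\rho_{12}$ sends $1$ outside $\{1\}$: explicitly $\rho_1(1)=3$, $\rho_2(1)=2$, $\rho_5(1)=3$, $\rho_7(1)=4$, $\rho_{10}(1)=2$, $\rho_{12}(1)=3$. Combined with the hypothesis $\pi(n)\neq n$, this gives $(\pi(n)-n)(\rho(1)-1)\neq 0$, so Case~$(i)$ of Theorem~\ref{Th_F_G} is vacuous, and $\pi\rho^R\sim I_{n+4}$ holds if and only if at least one of $f(x)=Q_\pi(x)+Q_{I_n}(x)$ and $g(y)=Q_\rho(y)+Q_{I_4}(y)$ satisfies the WHC on $(\pi(n),n)$, respectively $(1,\rho(1))$. (Both conditions are meaningful: $f$ is a quadratic bent function on $\F_2^n$ because $\pi\in IS_n$ and $n$ is even, and $g$ is a quadratic bent function on $\F_2^4$ because $\rho\in IS_4$.)

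The key point is then that $g=Q_\rho+Q_{I_4}$ never satisfies the WHC on $(1,\rho(1))$ for these six $\rho$; granting this, the criterion above reduces to ``$f$ satisfies the WHC on $(\pi(n),n)$'', which is exactly the claimed equivalence. To prove it I would invoke Proposition~\ref{Prop_WHC} with $n=4$: the four decompositions $\mQ\vert_{u,v}(\bar y)$ of a quadratic function on $\F_2^4$ with respect to a pair $(i,j)$ all share the \emph{same} quadratic part --- the monomials of the function touching neither $i$ nor $j$ --- so either all four are affine, or all four are genuine (hence necessarily nondegenerate, since the only degree-two monomial in two variables is $y_ay_b$) quadratics on two variables. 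By Proposition~\ref{Prop_WHC} the first situation is Case~$(i)$, i.e.\ the WHC on $(i,j)$ holds, and the second is Case~$(ii)$, i.e.\ the WHC fails. Consequently, the WHC on $(1,\rho(1))$ holds for $g$ if and only if $g$ contains no quadratic monomial $y_ay_b$ with $\{a,b\}\cap\{1,\rho(1)\}=\emptyset$.

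It therefore remains to exhibit, for each $\rho\in\{\rho_1,\rho_2,\rho_5,\rho_7,\rho_{10},\rho_{12}\}$, one such forbidden monomial of $g=Q_\rho+Q_{I_4}$. Writing the six functions out --- for instance $Q_{\rho_1}(y)+Q_{I_4}(y)=y_1y_2+y_1y_4+y_2y_4+y_3y_4$, in which the monomial $y_2y_4$ avoids $\{1,\rho_1(1)\}=\{1,3\}$ --- one checks in each case that a quadratic monomial survives the restriction to $y_1=u,\,y_{\rho(1)}=v$. Hence $g$ fails the WHC on $(1,\rho(1))$ for all six, and combining with the first paragraph completes the proof.

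I expect the only real labour to be these six explicit evaluations of $Q_\rho+Q_{I_4}$ together with the survival checks; there is no conceptual obstacle. Indeed, these are precisely the permutations \emph{not} handled by Corollary~\ref{101024th1}, for which the corresponding restriction of $Q_\rho+Q_{I_4}$ \emph{does} become affine, so the split between the two corollaries rests exactly on whether a quadratic term survives the restriction.
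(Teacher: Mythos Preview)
Your proof is correct and follows essentially the same approach as the paper: both reduce to Case~$(ii)$ of Theorem~\ref{Th_F_G} and then verify that $g=Q_\rho+Q_{I_4}$ fails the WHC on $(1,\rho(1))$ for each of the six $\rho$ by checking that the restrictions $g_{u,v}$ remain genuinely quadratic (equivalently, bent on $\F_2^2$). Your version is in fact more streamlined --- you invoke Proposition~\ref{Prop_WHC} directly to translate ``WHC fails'' into ``a quadratic monomial avoiding $\{1,\rho(1)\}$ survives'', whereas the paper re-derives the explicit Walsh table for $\rho_1$ and computes $\prod_{i,j}G_{i,j}(b)=-2^4$ before appealing to Theorem~\ref{Th_F_G}~$(ii)$, which is redundant given that theorem.
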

	\begin{proof}
		Note that for $\rho\in \{\rho_1,\rho_2,\rho_5,\rho_7,\rho_{10},\rho_{12}\}$, the quadratic functions $Q_{\rho}(y) + Q_{I_4}(y)$ are given as follows:  \cl
		\begin{align*}
			Q_{\rho_1}(y)+Q_{I_4}(y) &=  y_1y_2 + y_1y_4 + y_2y_4 + y_3y_4,
			\\
			Q_{\rho_2}(y)+Q_{I_4}(y) &=  y_1y_2 + y_1y_3 + y_1y_4 + y_2y_3 + y_2y_4 + y_3y_4,
			\\
			Q_{\rho_5}(y)+Q_{I_4}(y) &=  y_1y_2 + y_1y_3 + y_1y_4 + y_2y_3 + y_2y_4 + y_3y_4,
			\\
			Q_{\rho_7}(y)+Q_{I_4}(y) &=  y_1y_3 + y_2y_3 + y_2y_4 + y_3y_4,
			\\
			Q_{\rho_{10}}(y)+Q_{I_4}(y) &=  y_1y_2 + y_1y_3 + y_1y_4 + y_3y_4,
			\\
			Q_{\rho_{12}}(y)+Q_{I_4}(y) &=  y_1y_3 + y_2y_3 + y_2y_4 + y_3y_4.
		\end{align*} \cn
		The calculations for all $\rho$ in the set are similar, and we will take $\rho=\rho_1=[3,2,4,1]$ as an instance. In this case, letting $\pi'=\pi\rho^R $, $f(x)=Q_{I_n}(x)+Q_\pi(x)$, and $g(y)=Q_{\rho_1}(y)+Q_{I_4}(y)$, we have
		\begin{align*}h(x,y)&=Q_{I_{n+4}}(x,y)+Q_{\pi'}(x,y)=f(x_1,\ldots,x_{n})+g(y_1,y_2,y_3,y_4)+x_{\pi(n)}y_3+x_ny_1\\&=f(x_1,\dots, x_n)+y_1y_2 + y_1y_4 + y_2y_4 + y_3y_4+x_{\pi(n)}y_{3}+x_{n}y_{1}.\end{align*}
		For $g(y) = y_{1}y_{2}+y_{3}y_{4}+y_{2}y_{4}+y_{4}y_{1}$, the function $g_{i,j}(\bar{y})=g\arrowvert_{y_3=i,y_1=j} =y_2y_4 + j(y_2+y_4) + i y_4 $ contains the quadratic term $y_2y_4$ for any $(i,j)\in\F_2^2$. 
		\cl	Since $\pi(n) \neq n$, the permutation $\pi'$ is compatible with $I_{n+4}$ if and only if the condition in Theorem~\ref{Th_F_G}~$(ii)$ is satisfied. 
		Below we investigate the explicit value of the product in Theorem~\ref{Th_F_G}~$(ii)$.
		\cn
		
		Denote 
		$$W_{h_y}(a,b)=\sum_{x\in \F_2^n}(-1)^{h(x,y)+\ccb L_a(x)+L_b(y)\ccn } = (-1)^{\ccb L_b(y)\ccn } \sum_{x\in \F_2^n}(-1)^{h(x,y)+\ccb L_a(x)\ccn}.
		$$ Table \ref{tab2171024} lists the values of $W_{h_y}$, for all $y\in (y_1,y_2,y_3,y_4)\in \F_2^4$, where $\alpha=(-1)^{b_1}$, $\beta=(-1)^{b_2}$, $\gamma=(-1)^{b_3}$, and $\delta=(-1)^{b_4}$.

		\begin{table}
			\begin{center}
				\begin{tabular}{cccc|l}
					$y_1$&$y_2$&$y_3$&$y_4$&$W_{h_{y}}(a,b)$ \\
					\hline
					$0$&$0$&$0$&$0$&$W_f(a)$\\
					$0$&$0$&$0$&$1$&$W_f(a)\delta$\\
					$0$&$0$&$1$&$0$&$W_f(a+e_{\pi(n)})\gamma$\\
					$0$&$0$&$1$&$1$&$W_f(a+e_{\pi(n)})\gamma\delta(-1)$\\
					$0$&$1$&$0$&$0$&$W_f(a)\beta$\\
					$0$&$1$&$0$&$1$&$W_f(a)\beta\delta(-1)$\\
					$0$&$1$&$1$&$0$&$W_f(a+e_{\pi(n)})\beta\gamma$\\
					$0$&$1$&$1$&$1$&$W_f(a+e_{\pi(n)})\beta\gamma\delta$\\
					$1$&$0$&$0$&$0$&$W_f(a+e_{n})\alpha$\\
					$1$&$0$&$0$&$1$&$W_f(a+e_{n})\alpha\delta(-1)$\\
					$1$&$0$&$1$&$0$&$W_f(a+e_{n}+e_{\pi(n)})\alpha\gamma$\\
					$1$&$0$&$1$&$1$&$W_f(a+e_{n}+e_{\pi(n)})\alpha\gamma\delta$\\
					$1$&$1$&$0$&$0$&$W_f(a+e_{n})\alpha\beta(-1)$\\
					$1$&$1$&$0$&$1$&   $W_f(a+e_{n}) (-1)\alpha\beta\delta$ \\
					$1$&$1$&$1$&$0$& $W_f(a+e_{n}+e_{\pi(n)}(-1)\alpha\beta\gamma$\\
					$1$&$1$&$1$&$1$&$ W_f(a+e_{n}+e_{\pi(n)})\alpha\beta\gamma\delta$ 
				\end{tabular}
				\caption{The Walsh-Hadamard transform of $h$}\label{tab2171024}
			\end{center}
		\end{table}
		
		From Table \ref{tab2171024}, the Walsh-Hadamard transform of $h$ at $(a,b)$ can be expressed as follows:
		\begin{align*}
			W_h(a,b)=&\sum_{y\in\F_2^4}W_{h_y}(a,b)\\
			=&W_f(a)(1+\delta+\beta(1-\delta))+W_f(a+e_{\pi(n)})\gamma(1-\delta+\beta(1+\delta))\\
			&\,+W_f(a+e_{{n}})\alpha(1-\delta-\beta(1+\delta))\\&+W_f(a+e_{{n}}+e_{\pi(n)})\alpha\gamma(1+\delta-\beta(1-\delta)).
		\end{align*}
		Recalling the expression of $W_h(a,b)$ from \eqref{Eq_WH_GF} as below, 
$$W_h(a,b)=G_{0,0}(b)\widetilde{F_{0,0}}(a)+G_{0,1}(b)\widetilde{F_{0,1}}(a)+G_{1,0}(b)\widetilde{F_{1,0}}(a)+G_{1,1}(b) \widetilde{F_{1,1}}(a),$$ 
		where $\widetilde{F_{i,j}}(a)=W_f(a + ie_{\pi(n)} + je_{n})$ for $i,j\in \F_2$,
		we see that 
		\[
		\begin{array}{ll}
		G_{0,0}(b)=(1+\delta+\beta(1-\delta)), & G_{1,0}(b)=\gamma(1-\delta+\beta(1+\delta)), \\
		G_{0,1}(b)=\alpha(1-\delta-\beta(1+\delta)),
		&
		G_{1,1}(b)=\alpha\gamma(1+\delta-\beta(1-\delta)).
		\end{array}
		\]
		Notice that 
		\[
		\begin{split}
		\prod_{i,j\in\{0,1\}}G_{i,j}(b) & = \alpha^2 \gamma^2 ((1+\delta)^2-\beta^2(1-\delta)^2) ((1-\delta)^2-\beta^2(1+\delta)^2) \\
		& = - ((1+\delta)^2- (1-\delta)^2)^2
		= -2^4.
		\end{split}
		\]
		Thus, it follows from Theorem~\ref{Th_F_G} $(ii)$ that $h(x,y)$ is bent if and only if 
        $f(x)$ satisfies the WHC
		\begin{equation*}
			\prod_{i,j\in\{0,1\}}\widetilde{F_{i,j}}(a)=2^{2n}.
		\end{equation*}
		Similarly as $\rho=\rho_1$, the functions $g_{i,j}(\bar{y})$ for $i,\,j\in \F_2$ are all bent.
		Hence for $\rho\in \{\rho_2,\rho_5,\rho_7,\rho_{10},\rho_{12}\}$, $\pi \rho^R \sim I_{n+4}$ if and only if $f(x)$ satisfies the WHC.
	\end{proof}
	
	\medskip
	
	For even $n$, Theorem \ref{Th_F_G}, Corollary \ref{101024th1} and Corollary \ref{121024th2} characterize the required properties of $\pi \in S_n$ when it is extended with $\rho_i\in IS_4$ on the right side. For odd $n$, we have Theorem \ref{Th_F_G_odd} and Corollary \ref{101024th1}. 
	Below we shall investigate  how one can 
	obtain permutations in $S_{4m}$ for $m\geq 2$ that are compatible with $I_{4m}$, when $\pi$ is picked from $IS_4$.
	
	\smallskip 
	
	We start with the case of $m=2$. For the 12 permutations in $IS_4$, 
	by a routine calculation, the sets of $\rho_i$ for $1\leq i\leq 12$ such that $f(x)=Q_{I_4}(x)+Q_{\rho_i}(x)$ satisfies \cl the WHC are given as follows,
	\[
	\begin{array}{rcl}
	S_{WHC}&=&\{\rho_1,\rho_3,\rho_8,\rho_9,\rho_{10},\rho_{12}\}.
	\end{array}
	\] \cn 
	Below we shall recursively extend the permutations in $IS_4$ except for $\rho_2, \rho_5$ (which cannot be extended multiple times),
	thereby obtaining permutations in $IS_{4(k+1)}$ for $m=1,2,3,\dots$.
	Denote by
	\[
	\pi\rho^{R_{m}} = \pi\overbrace{\rho^R \cdots \rho^R}^{m}
	\] 
	the right extension on $\pi$ with $\rho$ by $m$ times. When $\pi = \rho$, we denote $\pi\rho^{R_{m}}$ as $\rho^{R_{m+1}}$.
	We have the following result.
\begin{theorem}\label{Th_2} 
Let $n=4m$ with an integer $m\geq 2$ and $IS_4 = \{\rho_1,\rho_2,\dots, \rho_{12}\}$.
Then, for any $\rho_i \in IS_4\setminus\{\rho_2,\rho_5\} $, the permutation $\rho_i^{R_m}$ is compatible with $ I_{4m}.$ 
\end{theorem}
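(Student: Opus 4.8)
The plan is to proceed by induction on $m$, exploiting the two ways of viewing $\rho_i^{R_m}$ as a right-extension: from the definition, $\rho_i^{R_m}$ is the concatenation of $m$ shifted copies of $\rho_i$, so that both $\rho_i^{R_m}=\rho_i^{R_{m-1}}\rho_i^{R}$ and $\rho_i^{R_m}=\rho_i\bigl(\rho_i^{R_{m-1}}\bigr)^{R}$ hold. The base case $m=1$ is immediate since $\rho_i^{R_1}=\rho_i\in IS_4$; so fix $m\ge 2$ and assume $\rho_i^{R_{m-1}}\in IS_{4(m-1)}$. Both $4$ and $4(m-1)$ are then even integers $\ge 4$, as required by Theorem~\ref{Th_F_G} and Corollary~\ref{101024th1}.

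I would split into three cases covering the ten permutations of $IS_4\setminus\{\rho_2,\rho_5\}$. (a) For $\rho_i\in\{\rho_3,\rho_4,\rho_6,\rho_8,\rho_9,\rho_{11}\}$, use $\rho_i^{R_m}=\rho_i^{R_{m-1}}\rho_i^{R}$ and apply Corollary~\ref{101024th1} with $\pi=\rho_i^{R_{m-1}}\in IS_{4(m-1)}$: since $\rho_i$ belongs to the set of that corollary, $\rho_i^{R_m}\sim I_{4m}$ with no further condition. (b) For $\rho_i\in\{\rho_{10},\rho_{12}\}$ one has $\rho_i(4)=4$; using $\rho_i^{R_m}=\rho_i\bigl(\rho_i^{R_{m-1}}\bigr)^{R}$ with left factor $\pi=\rho_i\in IS_4$ and right factor $\rho=\rho_i^{R_{m-1}}\in IS_{4(m-1)}$, the product $(\pi(4)-4)(\rho(1)-1)$ vanishes, so Theorem~\ref{Th_F_G}\,$(i)$ (with $n=4$) immediately gives $\rho_i^{R_m}\sim I_{4m}$. (c) For $\rho_i\in\{\rho_1,\rho_7\}$ one has $\rho_i(4)\ne 4$ and $\rho_i(1)\ne 1$; using the same decomposition $\rho_i^{R_m}=\rho_i\bigl(\rho_i^{R_{m-1}}\bigr)^{R}$ with $\pi=\rho_i\in IS_4$, $\rho=\rho_i^{R_{m-1}}$, one has $\rho(1)=\rho_i(1)\ne 1$, hence $(\pi(4)-4)(\rho(1)-1)\ne 0$ and Theorem~\ref{Th_F_G}\,$(ii)$ is the relevant case; it then remains only to verify that $f(x)=Q_{\rho_i}(x)+Q_{I_4}(x)$ satisfies the WHC on $(\rho_i(4),4)$.

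That verification is finite and independent of $m$. We have $Q_{\rho_1}(x)+Q_{I_4}(x)=x_1x_2+x_1x_4+x_2x_4+x_3x_4$ and $Q_{\rho_7}(x)+Q_{I_4}(x)=x_1x_3+x_2x_3+x_2x_4+x_3x_4$, both bent on $\F_2^4$ since $\rho_1,\rho_7\in IS_4$, and in both cases $\rho_i(4)<4$ so the pair $(\rho_i(4),4)$ is admissible in Definition~\ref{Def_WHC}. Restricting $f$ on that pair --- on $(x_1,x_4)$ for $\rho_1$ and on $(x_3,x_4)$ for $\rho_7$ --- I would check that each of the four restrictions $f\vert_{x_{\rho_i(4)}=u,x_4=v}$, $u,v\in\F_2$, is an affine function of the remaining two variables, and that their supports (the single points where the Walsh--Hadamard transform is nonzero, equal to $\pm 4$ there) are pairwise distinct and exhaust $\F_2^2$. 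Consequently, for every $c$ exactly one of the four decomposition transforms is nonzero, which by Proposition~\ref{Prop_WHC} is precisely Case~$(i)$, i.e.\ the WHC on $(\rho_i(4),4)$. Theorem~\ref{Th_F_G}\,$(ii)$ then gives $\rho_i^{R_m}\sim I_{4m}$, closing the induction.

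The step requiring the most thought is the choice, in case~(c), of the decomposition $\rho_i^{R_m}=\rho_i\bigl(\rho_i^{R_{m-1}}\bigr)^{R}$ rather than the more obvious $\rho_i^{R_m}=\rho_i^{R_{m-1}}\rho_i^{R}$: the former pins the Walsh--Hadamard obligation onto the fixed four-variable function $Q_{\rho_i}+Q_{I_4}$, whose WHC status is settled once and for all, whereas the latter would place it on the growing permutation $\rho_i^{R_{m-1}}$ and force a strengthened induction hypothesis tracking a WHC through every step. Everything else --- the finite WHC check via Proposition~\ref{Prop_WHC} and the verification that $IS_4\setminus\{\rho_2,\rho_5\}$ is partitioned by (a)--(c) --- is routine.
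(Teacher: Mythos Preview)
Your cases (a) and (b) coincide with the paper's treatment. Case (c) is where you diverge, and your route is genuinely simpler. The paper uses the decomposition $\rho_1^{R_{k+1}}=\rho_1^{R_k}\rho_1^R$ and is therefore forced into exactly the strengthened induction you warn against in your last paragraph: it carries the hypothesis that $f_k=Q_{\rho_1^{R_k}}+Q_{I_{4k}}$ satisfies the WHC on $(\pi(4k),4k)$, and spends most of the proof propagating that WHC through an explicit Walsh-transform computation of $W_{f_{k+1}}(\bar a),\,W_{f_{k+1}}(\bar a+e_{\pi'(4(k+1))}),\,W_{f_{k+1}}(\bar a+e_{4(k+1)}),\,W_{f_{k+1}}(\bar a+e_{\pi'(4(k+1))}+e_{4(k+1)})$ to show they come in equal pairs. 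For $\rho_7$ the paper then avoids repeating this by invoking $(\rho_7^{R_m})^{-1}=\rho_1^{R_m}$ and Lemma~\ref{lem_properties}\,(ii). Your choice of the decomposition $\rho_i^{R_m}=\rho_i(\rho_i^{R_{m-1}})^R$ pins the WHC to the fixed four-variable function $Q_{\rho_i}+Q_{I_4}$ on $(\rho_i(4),4)$, reducing the whole induction step to a single finite check (affine restrictions with distinct supports) that is done once for $\rho_1$ and once for $\rho_7$; no Walsh bookkeeping through $k$, no detour via inverses. What the paper's approach buys is an explicit formula for $W_{f_{k+1}}$ in terms of $W_{f_k}$, which is informative in its own right but unnecessary for the statement at hand.
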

	
\begin{proof}
By Corollary \ref{101024th1}, if $\rho_i\in \{\rho_3,\rho_4,\rho_6, \rho_8,\rho_9, \rho_{11}\}$, then $\rho_i^{R_{m-1}}\sim I_{4(m-1)}$ immediately implies $\rho_i^{R_{m}}\sim I_{4m}$. For the permutations $\rho_{10}, \rho_{12}$, since $\rho_{10}(4) = \rho_{12}(4)  = 4$, it follows from Theorem \ref{Th_F_G} $(i)$ that 
the permutations $\rho_{10}^{R_{m}}=\rho_{10}(\rho_{10}^{R_{m-1}})^{R}$ and $\rho_{12}^{R_{m}}=\rho_{12}(\rho_{12}^{R_{m-1}})^{R}$
are compatible with $I_{4m}$. 
		
We will prove the statement for the remaining permutations $\rho_1,\rho_7$,
by induction on $m$. We start with the discussion for $\rho_1$.

For $m=2$, it is clear that 
\[
\rho_1^{R_2}=\rho_1 \rho_1^R \sim I_8.
\]
Suppose $\rho_1^{R_{k}}$ is compatible with $I_{4k}$ for an integer $k$ with $2\leq k<m$. Then we need to show that 
$\rho_i^{R_{k+1}}$ is compatible with $I_{4(k+1)}$.
By Corollary \ref{121024th2}, the assumption $\rho_1^{R_{k}}\sim I_{4k}$
implies that the bent function 
		\ccb 
\[
f_{k}(x) = Q_{\rho_1^{R_{k}}}(x) + Q_{I_{4k}}(x)
\] 
satisfies the WHC on $(\pi(4k), 4k)$, where $\pi = \rho_1^{R_{k}}$. \cl 
This implies that for any $a\in \F_2^{4k}$, 
\begin{equation}\label{Eq_WHC-1}
			\frac{W_{f_k}(a+e_{4k}+e_{\pi(4k)})}{W_{f_k}(a)} = \frac{W_{f_k}(a+e_{4k})}{W_{f_k}(a+e_{\pi(4k)})}.   
		\end{equation}
		\cn
		Below we shall show that the bent function 
		\ccn
		\ccb  
		\[
		f_{k+1}(x) =  Q_{\rho_1^{R_{k+1}}}(x) + Q_{I_{4(k+1)}}(x)
		\] 
		\cl    satisfies the WHC on $(\pi'(4(k+1)), 4(k+1))$, where $\pi' = \pi\rho_1^R = \rho_1^{R_{k+1}}$. Note that for integers $j=1,2,\dots, 4k$, 
        $\pi'(j)=\pi(j)$.
        \cn
		
		Recall from the proof of Corollary~\ref{121024th2} that
		\allowdisplaybreaks
		\begin{align*}
			W_{f_{k+1}}(\overline{a})&=(1+\delta)\left(W_{f_k}(a)+\alpha\gamma W_{f_k}(a+e_{{4k}}+e_{\pi(4k)})\right.\\
			&\qquad\qquad\qquad\left.+\beta(\gamma W_{f_k}(a+e_{\pi(4k)})-\alpha W_{f_k}(a+e_{{4k}}))\right)\\
			&\quad+(1-\delta)\left(\beta(W_{f_k}(a)-\alpha\gamma W_{f_k}(a+e_{{4k}}+e_{\pi(4k)}))\right.\\
			&\left.\qquad\qquad\qquad+\gamma W_{f_k}(a+e_{\pi(4k)})+\alpha W_{f_k}(a+e_{{4k}})\right)\\
			&=(1+\delta)\left(W_{f_k}(a)(1+\epsilon\alpha\gamma) +\beta W_{f_k}(a+e_{\pi(4k)})(\gamma-\epsilon\alpha)\right)\\
			&\quad +(1-\delta)\left(\beta W_{f_k} (a)(1-\epsilon\alpha\gamma) + W_{f_k}(a+e_{\pi(4k)})(\gamma+\epsilon\alpha)\right),
		\end{align*}
		\ccb where $\overline{a}=(a_1,a_2,\hdots,a_{4k+4})$, $a=(a_1,a_2,\hdots,a_{4k})$,
		$\alpha=(-1)^{a_{4k+1}},\beta=(-1)^{a_{4k+2}},\gamma=(-1)^{a_{4k+3}}$, $\delta=(-1)^{a_{4k+4}}$, and $\epsilon$ denotes the division in \eqref{Eq_WHC-1} and takes the values $\pm 1$. \ccn Therefore,
		\begin{align*}
			&W_{f_{k+1}}(\overline{a}+e_{{4(k+1)}}+e_{\pi'(4(k+1))})\\
			&=(1-\delta)\left( W_{f_k}(a)(1-\epsilon\alpha\gamma) +\beta W_{f_k}(a+e_{\pi(4k)})(\gamma+\epsilon\alpha)\right)\\
			&\qquad +(1+\delta)\left(\beta W_{f_k}(a)(1+\epsilon\alpha\gamma) + W_{f_k}(a+e_{\pi(4k)})(\gamma-\epsilon\alpha)\right).
		\end{align*}
		From here, it is clear that $W_{f_{k+1}}(\overline{a}+e_{{4(k+1)}}+e_{\pi'(4(k+1))})=\beta W_{f_{k+1}}(\overline{a})$.
		In the same way,
		\begin{align*}
			W_{f_{k+1}}(\overline{a}+e_{\pi'(4(k+1))})
			&=(1+\delta)\left(W_{f_k}(a)(1-\epsilon\alpha\gamma) +\beta W_{f_k}(a+e_{\pi(4k)})(\gamma+\epsilon\alpha)\right)\\
			&+(1-\delta)\left(\beta W_{f_k}(a)(1+\epsilon\alpha\gamma) + W_{f_k}(a+e_{\pi(4k)})(\gamma-\epsilon\alpha)\right),
		\end{align*}
		while
		\begin{align*}
			W_{f_{k+1}}(\overline{a}+e_{{4(k+1)}}))
			&=(1-\delta)\left(W_{f_k}(a)(1+\epsilon\alpha\gamma) +\beta W_{f_k}(a+e_{\pi(4k)})(\gamma-\epsilon\alpha)\right)\\
			&+(1+\delta)\left(\beta W_{f_k}(a)(1-\epsilon\alpha\gamma) + W_{f_k}(a+e_{\pi(4k)})(\gamma+\epsilon\alpha)\right).
		\end{align*}
		It follows, therefore, that $W_{f_{k+1}}(\overline{a}+e_{\pi'(4(k+1))})=\beta W_{f_{k+1}}(\overline{a}+e_{{4(k+1)}})$.
		This implies that $f_{k+1}(x)$ fulfills the WHC on $(\pi'(4(k+1)),4(k+1))$. Therefore, $\rho_1^{R_{k+1}}$ is compatible with $I_{4(k+1)}$. 
		\cl 

        \smallskip 
        
        As for the permutation $\rho_7$, we have $\rho_7^{-1}= \rho_1$. According to \eqref{Eq_Ext_Inv_Rev},
		it follows that $(\rho_7\rho_7^R)^{-1} = (\rho_7^{-1})(\rho_7^{-1})^R = \rho_1 \rho_1^R$ and recursively, $(\rho_7^{R_{m}})^{-1} = \rho_1^{R_{m}}$.
		Thus it follows from Lemma~\ref{lem_properties} $(ii)$ that $\rho_7^{R_{m}} \sim I_{4m}$.
		\cn	
		This concludes the proof.
	\end{proof}


	\medskip
	
	\ccn
	
	In the following, we present a way to extend a maximal compatible set in dimension 4 (which has size 6) to a size-6 set in any dimension $4m$ for $m>1$, by recursively adding a shift of itself.
	\begin{theorem}	\label{Th_Main_Const}
		Given any maximal compatible set $\Pi
		$ in dimension $4$, 
		the set $\{ \rho^{R_m}:~ \rho \in \Pi\}$  
		is a compatible set in dimension $4m$ for any $m\geq 2$. 
	\end{theorem}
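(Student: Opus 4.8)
The plan is to reduce the pairwise compatibility of the permutations $\rho^{R_m}$, $\rho\in\Pi$, to the single statement $\tau^{R_m}\sim I_{4m}$ already established in Theorem~\ref{Th_2}, using the arithmetic of $R$-extensions from \eqref{Eq_Ext_Rev}--\eqref{Eq_Ext_Inv_Rev} together with Lemma~\ref{lem_properties}. First I would normalise $\Pi$: if $I_4\notin\Pi$, fix $\pi_0\in\Pi$ and replace $\Pi$ by $\Pi'=\{\pi_0^{-1}\circ\rho:\rho\in\Pi\}$, which by Lemma~\ref{lem_properties}(iii) is again a maximal compatible set and now contains $I_4$; since $\{\rho^{R_m}:\rho\in\Pi'\}=\{(\pi_0^{R_m})^{-1}\circ\rho^{R_m}:\rho\in\Pi\}$ is a left translate of $\{\rho^{R_m}:\rho\in\Pi\}$, one of the two sets is compatible iff the other is, so we may assume $I_4\in\Pi$. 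Then, by the classification of maximal compatible sets for $n=4$ obtained above (Table~\ref{tab_CS_4}), $\Pi$ is one of the $32$ listed sets; in particular $\Pi\subseteq\{I_4\}\cup(IS_4\setminus\{\rho_2,\rho_5\})$.

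Next I would record two elementary identities for $R_m$-extensions, both immediate from the block-diagonal description of $\alpha^{R_m}$ (on each of the $m$ coordinate blocks of size $4$ it acts as a copy of $\alpha\in S_4$) or, equivalently, by iterating \eqref{Eq_Ext_Rev} and \eqref{Eq_Ext_Inv_Rev}:
\[
\alpha^{R_m}\circ\beta^{R_m}=(\alpha\circ\beta)^{R_m}\qquad\text{and}\qquad(\alpha^{R_m})^{-1}=(\alpha^{-1})^{R_m}\quad\text{for all }\alpha,\beta\in S_4.
\]
In particular $(\rho^{R_m})^{-1}\circ\sigma^{R_m}=(\rho^{-1}\circ\sigma)^{R_m}$ for all $\rho,\sigma\in\Pi$.

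With this in place the argument is short. For $\rho\in\Pi\setminus\{I_4\}$ we have $\rho\in IS_4\setminus\{\rho_2,\rho_5\}$, so Theorem~\ref{Th_2} gives $\rho^{R_m}\sim I_{4m}$; together with $I_4^{R_m}=I_{4m}$ this shows that every element of $\{\rho^{R_m}:\rho\in\Pi\}$ lies in $IS_{4m}\cup\{I_{4m}\}$, which handles all pairs that involve $I_{4m}$. For distinct $\rho,\sigma\in\Pi$, put $\tau=\rho^{-1}\circ\sigma$. Since $\Pi$ is a compatible set, $\rho\sim\sigma$, hence $\tau\in IS_4$ and $\tau\neq I_4$. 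I would then verify $\tau\notin\{\rho_2,\rho_5\}$ in three cases: if $\rho=I_4$ then $\tau=\sigma\in\Pi$, which avoids $\rho_2,\rho_5$; if $\sigma=I_4$ then $\tau=\rho^{-1}$, and since $\{\rho_2,\rho_5\}$ is closed under inversion (by \eqref{inverses}) while $\rho\notin\{\rho_2,\rho_5\}$, also $\tau\notin\{\rho_2,\rho_5\}$; and if $\rho,\sigma\in IS_4$ then $\tau=\rho_i^{-1}\circ\rho_j$ for some $1\le i,j\le 12$, and no entry of Table~\ref{tab_invcomp} equals $2$ or $5$. Hence $\tau\in IS_4\setminus\{\rho_2,\rho_5\}$, so Theorem~\ref{Th_2} applies to $\tau$ and gives $\tau^{R_m}\sim I_{4m}$, that is, $(\rho^{R_m})^{-1}\circ\sigma^{R_m}\sim I_{4m}$; by Lemma~\ref{lem_properties}(iii) this is equivalent to $\rho^{R_m}\sim\sigma^{R_m}$, so $\{\rho^{R_m}:\rho\in\Pi\}$ is a compatible set in dimension $4m$.

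The only points that require genuine care are the normalisation to $I_4\in\Pi$ and the verification that $\rho^{-1}\circ\sigma$ never equals $\rho_2$ or $\rho_5$ --- exactly the two permutations outside the scope of Theorem~\ref{Th_2}; everything else is a mechanical assembly of Theorem~\ref{Th_2}, Lemma~\ref{lem_properties}, and the extension identities. An essentially equivalent route, avoiding even the composition identities above, is to apply Lemma~\ref{lem_mutual_compatible} directly with $\pi_1=\rho^{R_{m-1}}$, $\rho_1=\rho$, $\pi_2=\sigma^{R_{m-1}}$, $\rho_2=\sigma$, once one checks $\rho^{R_m},\sigma^{R_m}\in IS_{4m}$.
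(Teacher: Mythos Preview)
Your proposal is correct and follows essentially the same route as the paper: reduce $\rho^{R_m}\sim\sigma^{R_m}$ via Lemma~\ref{lem_properties}(iii) to $(\rho^{-1}\circ\sigma)^{R_m}\sim I_{4m}$, use the block-composition identities \eqref{Eq_Ext_Rev}--\eqref{Eq_Ext_Inv_Rev} to obtain $(\rho^{R_m})^{-1}\circ\sigma^{R_m}=(\rho^{-1}\circ\sigma)^{R_m}$, and invoke Theorem~\ref{Th_2} after checking via Table~\ref{tab_invcomp} that $\rho^{-1}\circ\sigma\notin\{\rho_2,\rho_5\}$. Your explicit normalisation step to $I_4\in\Pi$ is not needed here since the paper's convention (stated just before Subsection~\ref{subsect_n4}) already assumes compatible sets contain $I_n$, but it does no harm and your translation argument is correct.
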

	\begin{proof}
		Let $\pi,\sigma\in \Pi$. We know that 
		$\pi \sim \sigma$ and $\pi, \sigma \not\in \{\rho_2, \rho_5\}$.
		We shall show $\pi^{R_m}$ is compatible with $\sigma^{R_m}$, equivalently,
		$(\pi^{R_m})^{-1}\circ \sigma^{R_m}\sim I_{4m}$. 
		
		From $\pi \sim \sigma$, we know $\pi^{-1}\circ \sigma \sim I_4$ and $\sigma^{-1}\circ\pi\sim I_4$. Without loss of generality, we assume $\pi^{-1}\circ \sigma\sim I_4$. 
		According to \eqref{Eq_Ext_Inv_Rev} and \eqref{Eq_Ext_Rev}, we have 
		\allowdisplaybreaks
		\[
		\begin{split}
		(\pi^{R_m})^{-1}\circ \sigma^{R_m}
		&= (\pi ^{R_{m-1}}\pi^R )^{-1}\circ\sigma^{R_m}  
		\\
		&=((\pi ^{R_{m-1}})^{-1}(\pi^{-1})^{R})\circ \sigma^{R_m} 
		\\
		&=((\pi ^{R_{m-1}})^{-1}(\pi^{-1})^{R})\circ  (\sigma^{R_{m-1}}\sigma^R) 
		\\
		&= ((\pi^{R_{m-1}})^{-1}\circ \sigma ^{R_{m-1}}) ( \pi^{-1}\circ \sigma)^R \\
		& = ((\pi^{R_{m-2}})^{-1}\circ \sigma ^{R_{m-2}}) ( \pi^{-1}\circ \sigma)^{R_2} \\
		& = \ldots \\
		&= ( \pi^{-1}\circ \sigma)^{R_m},
		\end{split}
		\]
		where the second equality follows from~\eqref{Eq_Ext_Inv_Rev} and the fourth equation follows from~\eqref{Eq_Ext_Rev}.
		Notice that $\pi^{-1}\circ \sigma$ is compatible with $I_4$, 
		$\pi^{-1}\circ \sigma$ is neither $\rho_2$ nor $\rho_5$ (as shown in Table \ref{tab_invcomp}). It follows from Theorem \ref{Th_2}  that
		$(\pi^{-1}\circ \sigma)^{R_m}$ is compatible with $I_{4m}$, 
		implying
		$(\pi^{R_m})^{-1}\circ \sigma^{R_m}\sim I_{4m}$, and then $\pi^{R_m} \sim \sigma^{R_m}$.
		Since $\pi, \, \sigma$ are freely chosen from $\Pi$, one can easily see that the set $\{\rho^{R_m}\,:\, \rho \in \Pi\}$ is a compatible set. 
	\end{proof}
	According to Table \ref{tab_CS_4}, Theorem \ref{Th_Main_Const} yields $32$ compatible sets each consisting of $6$ \cli permutations \cn in $S_{4m}$ for all integers $m\geq 2$.
	
    
	\begin{remark} 
		For dimension $4m$, it is possible to consider other types of extension. For other extensions than repetition, to ensure newly extended permutations are still compatible,
		  one needs to check the required condition as in Lemma~\textup{\ref{lem_mutual_compatible}}. Specifically, 
		when $\pi\sim \sigma$ and they are extendable by $\rho_i, \rho_j$, respectively,
		if both $\rho_j^{-1}\circ\rho_i$ and $\rho_i^{-1}\circ\rho_j$  require 
		the WHC, then $\pi^{-1}\circ\sigma$ or $\sigma^{-1}\circ\pi$ must satisfy the corresponding WHC. Depending on the choice of permutations, additional checks may be required in the recursive extension. 
	\end{remark}
\cli  In Appendix A, we present several examples of compatible sets comprising six permutations in dimension $4m$ for $m=2,3,4$.  \cn

	\section{Conclusion}\label{Sec:Con} 
	
	In this paper, we propose a recursive construction of a codebook for uplink grant-free NOMA using GDJ sequences. 
	The contribution is twofold: we establish the necessary and sufficient condition for a permutation of type \( \pi\rho^R \) to be compatible with \( I_{n+m} \) when \( \pi\) is compatible with $I_n$, and \( \rho \) is compatible with $I_m$, respectively; and we recursively extend compatible sets of \( 4 \)-dimensional permutations to compatible sets of \( 4m \)-dimensional permutations. As a result, any compatible set in dimension 4 can be extended to a compatible set of the same size in dimension \( 4m \). The proposed approach allows for constructing many NOMA codebooks of $6N$ GDJ sequences of length $N=2^{4m}$ and the lowest possible coherence $1/\sqrt{N}$ for integers $m\geq 1$.

	\ccn

	\ccb
	\vskip.5cm
	\noindent

	\ccn

	
	

\appendix

\cli

\section{Explicit examples of compatible sets for generating NOMA codebooks}
\label{sec:appendix_examples}

The examples presented in this appendix were generated and verified using a computational script in SageMath. The script implements the exhaustive search for the base case of $n=4$ and the recursive self-extension method described in Theorem \ref{Th_Main_Const} as well as mixed-extension methods.

\subsection{Base case: maximal compatible sets for $n=4$}

The recursive construction starts with a maximal compatible set for $n=4$, containing $L=6$ permutations by exhaustion. Table \ref{tab:n4_sets} lists two such sets, $\Pi_1$ and $\Pi_2$. The set $\Pi_1$ corresponds to the first set listed in Table \ref{tab_CS_4}, while $\Pi_2$ and $\Pi_3$ correspond to the fourth set and 27th set, respectively. These sets form the $m=1$ base case, yielding codebooks of $K=96$ sequences of length $N=16$ with optimally low coherence $\mu(\bPhi) = 1/4$.

\begin{table}[h!]
\centering
\cli
\caption{\cli Example maximal compatible sets for $n=4$ (base case).}
\label{tab:n4_sets}
\begin{tabular}{ll}
\hline
\textbf{Set} & \textbf{Permutations (1-indexed)} \\
\hline
\multirow{2}{*}{$\Pi_1$} & $\{[1, 2, 3, 4], [3, 2, 4, 1], [3, 4, 1, 2],$\\
& $[1, 3, 4, 2], [4, 2, 1, 3], [4, 1, 3, 2]\}$ \\
\hline
\multirow{2}{*}{$\Pi_2$} & $\{[1, 2, 3, 4], [2, 3, 1, 4], [3, 4, 1, 2],$\\
& $[2, 4, 3, 1], [4, 2, 1, 3], [1, 4, 2, 3]\}$ \\
\hline
\multirow{2}{*}{$\Pi_3$} & 
$\{
[ 1, 2, 3, 4 ],
[ 1, 3, 4, 2 ],
[ 1, 4, 2, 3 ], $\\ & $
        [ 2, 3, 1, 4 ],
        [ 3, 4, 1, 2 ],
        [ 4, 2, 1, 3 ]
\}$ \\
\hline
\end{tabular}
\end{table}

\subsection{Recursive self-extension examples: $n=4m$}

For dimension $n=4m$, Theorem \ref{Th_Main_Const} guarantees that the recursive self-extension of any base compatible set $\Pi$ maintains compatibility. This yields a set of $L=6$ permutations $\Pi^{R_m} = \{ \rho^{R_m} \mid \rho \in \Pi \}$ in $S_{4m}$.
Below we illustrate this with $\Pi_1$ for $m=2,3,4$ as in Tables \ref{tab:n8_self_extension}-\ref{tab:n16_self_extension}, respectively.
Correspondingly, we obtain a $2^{4m}\times 6\cdot 2^{4m}$ NOMA codebook of sequences with length $2^{4m}$ and PAPR upper bouned by $2$ and optimally low coherrence $1/2^{2m}$.

\begin{table}[h!]
\centering
\cli
\caption{\cli Self-recursive extension on $\Pi_1$ for $m=2$.}
\label{tab:n8_self_extension}
\begin{tabular}{ll}
\hline
\textbf{Base $\pi_k \in \Pi_1$} & \textbf{Extended $\pi_k^{R_2} = \pi_k \cdot \pi_k^R$} \\
\hline
$[1, 2, 3, 4]$ & $[1, 2, 3, 4, 5, 6, 7, 8]$ \\
$[3, 2, 4, 1]$ & $[3, 2, 4, 1, 7, 6, 8, 5]$ \\
$[3, 4, 1, 2]$ & $[3, 4, 1, 2, 7, 8, 5, 6]$ \\
$[1, 3, 4, 2]$ & $[1, 3, 4, 2, 5, 7, 8, 6]$ \\
$[4, 2, 1, 3]$ & $[4, 2, 1, 3, 8, 6, 5, 7]$ \\
$[4, 1, 3, 2]$ & $[4, 1, 3, 2, 8, 5, 7, 6]$ \\
\hline
\end{tabular}
\end{table}

\begin{table}[h!]
\centering
\cli
\caption{\cli Self-recursive extension on $\Pi_1$ for $m=3$.}
\label{tab:n12_self_extension}
\begin{tabular}{ll}
\hline
\textbf{Base $\pi_k \in \Pi_1$} & \textbf{Extended $\pi_k^{R_3} = \pi_k^{R_2} \cdot \pi_k^R$} \\
\hline
$[1, 2, 3, 4]$ & $[1, 2, 3, 4, 5, 6, 7, 8, 9, 10, 11, 12]$ \\
$[3, 2, 4, 1]$ & $[3, 2, 4, 1, 7, 6, 8, 5, 11, 10, 12, 9]$ \\
$[3, 4, 1, 2]$ & $[3, 4, 1, 2, 7, 8, 5, 6, 11, 12, 9, 10]$ \\
$[1, 3, 4, 2]$ & $[1, 3, 4, 2, 5, 7, 8, 6, 9, 11, 12, 10]$ \\
$[4, 2, 1, 3]$ & $[4, 2, 1, 3, 8, 6, 5, 7, 12, 10, 9, 11]$ \\
$[4, 1, 3, 2]$ & $[4, 1, 3, 2, 8, 5, 7, 6, 12, 9, 11, 10]$ \\
\hline
\end{tabular}
\end{table}

\begin{table}[h!]
\centering
\cli
\caption{\cli Self-recursive extension on $\Pi_1$ for $m=4$.}
\label{tab:n16_self_extension}
\begin{tabular}{ll}
\hline
\textbf{Base $\pi_k \in \Pi_1$} & \textbf{Extended $\pi_k^{R_4}$} \\
\hline
$[1, 2, 3, 4]$ & $[1, 2, 3, 4, 5, 6, 7, 8, 9, 10, 11, 12, 13, 14, 15, 16]$ \\
$[3, 2, 4, 1]$ & $[3, 2, 4, 1, 7, 6, 8, 5, 11, 10, 12, 9, 15, 14, 16, 13]$ \\
$[3, 4, 1, 2]$ & $[3, 4, 1, 2, 7, 8, 5, 6, 11, 12, 9, 10, 15, 16, 13, 14]$ \\
$[1, 3, 4, 2]$ & $[1, 3, 4, 2, 5, 7, 8, 6, 9, 11, 12, 10, 13, 15, 16, 14]$ \\
$[4, 2, 1, 3]$ & $[4, 2, 1, 3, 8, 6, 5, 7, 12, 10, 9, 11, 16, 14, 13, 15]$ \\
$[4, 1, 3, 2]$ & $[4, 1, 3, 2, 8, 5, 7, 6, 12, 9, 11, 10, 16, 13, 15, 14]$ \\
\hline
\end{tabular}
\end{table}

\subsection{Recursive mixed-extension examples}
Exhaustive search reveals that one can also obtain compatible sets of size $6$
by \textit{mixed extension}: extending each permutation in one compatible set on the right with permutations from another compatible set. Suppose $\Pi_A$ is a compatible set in dimension $4m_1$ and 
$\Pi_B$ is another compatible set in dimension $4m_2$. We start with a candidate set $\Pi_A\Pi_B^R = \{\pi\rho^R: \pi\in\Pi_A, \rho\in \Pi_B\}$, which contains in total $\vert \Pi_A \vert \cdot \vert \Pi_B \vert$ permutations in dimension $4(m_1+m_2)$.
This set is then fed into a function in our implementation, which outputs all maximal compatible sets for the given input. As a result, we obtain many examples of compatible sets of size $6$ in higher dimensions. 

Taking $(\Pi_A, \Pi_B) = (\Pi_1, \Pi_3)$ as in Table~\ref{tab:n4_sets}, one can obtain four compatible sets in dimension $8$ of size $6$. Table~\ref{tab:n8_mixed_extension} gives an example for $m=2$.



\begin{table}[h!]
\centering
\cli
\caption{\cli An example of mixed extension for $m=2$.}
\label{tab:n8_mixed_extension}
\begin{tabular}{lll}
\hline
\textbf{$\pi \in \Pi_1$} & \textbf{$\rho \in \Pi_3$} & \textbf{Extended $\pi\rho^R$} \\
\hline
$[1, 2, 3, 4]$ & $[1, 2, 3, 4]$ & $[ 1, 2, 3, 4, 5, 6, 7, 8 ]$ \\
$[2, 4, 3, 1]$ & $[1, 4, 2, 3]$ & $[ 2, 4, 3, 1, 5, 8, 6, 7 ]$ \\
$[3, 2, 4, 1]$ & $[3, 4, 1, 2]$ & $[ 3, 2, 4, 1, 7, 8, 5, 6 ]$ \\
$[3, 4, 1, 2]$ & $[2, 3, 1, 4]$ & $[ 3, 4, 1, 2, 6, 7, 5, 8 ]$ \\
$[4, 1, 3, 2]$ & $[1, 3, 4, 2]$ & $[ 4, 1, 3, 2, 5, 7, 8, 6 ]$ \\
$[4, 2, 1, 3]$ & $[4, 2, 1, 3]$ & $[ 4, 2, 1, 3, 8, 6, 5, 7 ]$ \\
\hline
\end{tabular}
\end{table}

Take $\Pi_A$ as the compatible set in Table~\ref{tab:n8_mixed_extension} and $\Pi_B$ as $\Pi_3$ again. Similarly, one obtains several compatible sets of size $6$.  
Table~\ref{tab:n12_mixed_extension} gives an example for $m=3$.

\begin{table}[h!]
\centering
\cli
\caption{\cli An example of mixed extension for $m=3$.}
\label{tab:n12_mixed_extension}
\begin{tabular}{lll}
\hline
\textbf{$\pi \in \Pi_A$} & \textbf{$\rho \in \Pi_3$} & \textbf{Extended $\pi\rho^R$} \\
\hline
$[ 1, 2, 3, 4, 5, 6, 7, 8 ]$  & $[1, 2, 3, 4]$ & $[ 1, 2, 3, 4, 5, 6, 7, 8, 9, 10, 11, 12 ]$ \\
$[ 2, 4, 3, 1, 5, 8, 6, 7 ]$ & $[1, 4, 2, 3]$& $[ 2, 4, 3, 1, 5, 8, 6, 7, 9, 12, 10, 11 ]$\\
$[ 3, 2, 4, 1, 7, 8, 5, 6 ]$ & $[3,4,1,2]$ & $[ 3, 2, 4, 1, 7, 8, 5, 6, 11, 12, 9, 10 ]$\\
$[ 3, 4, 1, 2, 6, 7, 5, 8 ]$ & $[2,3,1,4]$ & $[ 3, 4, 1, 2, 6, 7, 5, 8, 10, 11, 9, 12 ]$\\
$[ 4, 1, 3, 2, 5, 7, 8, 6 ]$ & $[1,3,4,2]$& $[ 4, 1, 3, 2, 5, 7, 8, 6, 9, 11, 12, 10 ]$\\
$[ 4, 2, 1, 3, 8, 6, 5, 7 ]$ & $[4,2,1,3]$& $[ 4, 2, 1, 3, 8, 6, 5, 7, 12, 10, 9, 11 ]$\\
\hline
\end{tabular}
\end{table}

Experimental results demonstrate that the mix-extension method can generate many compatible sets of size $6$ in a dimension of $4m$ for $m\geq 2$. It is worth noting that one is not necessarily restricted to extending a compatible set by another compatible set. Instead, one may extend a set of permutations with another set of permutations. Starting from $n=4$, among all permutations in $IS_4$, one can get a candidate set of 169 permutations in dimension~$8$, which leads to $1936$ compatible sets of size $6$ in dimension $8$.
This type of extension can continue with larger $m$.

The theoretical analysis of the recursive mixed-extension approach appears complicated by using the technique in this paper. We shall further develop new methods for those cases in our future work.

\subsection{Summary of recursive construction parameters}

The recursive construction generates NOMA codebooks of size $K=6N$ for all dimensions $n=4m$, where each sequence has length $N=2^n$ and PAPR upper bounded by $2$.

\begin{table}[h!]
\centering
\cli
\caption{\cli Properties of NOMA codebooks from the recursive construction.}
\label{tab:recursive_summary}
\begin{tabular}{cccccc}
\hline
$m$ & Dim. $n$ & Len. $N$ & \#Sequences $K$ & Coherence $\mu(\bPhi)$ & Overloading Factor $L$ \\
\hline
1 & 4 & 16 & 96 & $1/4$ & 6 \\
2 & 8 & 256 & 1,536 & $1/16$ & 6 \\
3 & 12 & 4,096 & 24,576 & $1/64$ & 6 \\
4 & 16 & 65,536 & 393,216 & $1/256$ & 6 \\
\hline
\end{tabular}
\end{table}

All codebooks achieve the optimal coherence $1/\sqrt{N}$ as shown in \eqref{Eq_coh_lowerbounds}, and the construction maintains a constant overloading factor $L=6$ for all dimensions $n=4m$ with $m \geq 1$.

\cn

\end{document}